\documentclass[12pt]{article}

\usepackage{amsfonts,amsmath,amssymb,amsthm,geometry}

\newtheorem{theorem}{Theorem}[section]
\newtheorem{lemma}{Lemma}[section]
\newtheorem{corollary}{Corollary}[section]
\newtheorem{proposition}{Proposition}[section]

\title{Finite convexification by Minkowski summation}
\author{Mark Meyer}

\begin{document}

\maketitle

\begin{abstract}
    Let $A\subset\mathbb{R}^n$ be a nonempty compact set and let $K\subset\mathbb{R}^n$ be a nonempty compact convex set. We study the problem of when there exists $\lambda>0$ such that the Minkowski sum $A+\lambda K$ is convex. First, we show that this finite convexification is equivalent to a uniform covering condition on the exposed sets of $A$ and $K$. We then show that another characterization is possible in terms of the covering radii of the iterated exposed sets after translating each exposed face of $K$ so that its relative centroid is $0$. In the case of convex polytopes, we show that it suffices to check the exposed sets in the first level of the iteration. We also study several special cases, such as strictly convex convexifiers, lower-dimensional convexifiers, strongly isomorphic polytopes, and the planar case. 
\end{abstract}

\section{Introduction}

Throughout this paper, we let $n\geq 2$ be an integer. The Minkowski sum of nonempty compact sets $A,B\subset\mathbb{R}^n$ is defined by $A+B:=\{a+b:a\in A,b\in B\}$. There has been a significant amount of work done to understand the convexification effect of Minkowski summation, which is the phenomenon that the $m$-fold sum $\sum_{i=1}^{m}A$ of a nonconvex set $A$ closely resembles its convex hull with respect to certain measures of nonconvexity as $m\rightarrow \infty$, possibly after scaling by an appropriate constant. It is reasonable to study this problem from different perspectives, such as when the sets in the sum differ from each other, and when some of the sets are convex. We will study one such perspective. Given a nonempty compact convex $K\subset\mathbb{R}^n$ and a nonempty compact $A\subset\mathbb{R}^n$, we define
\begin{align*}
    c^{(K)}(A):=\inf\{\lambda>0:A+\lambda K\textup{ is convex\}}
\end{align*}
to be the infimum of all scalings $\lambda>0$ for which $\lambda K:=\{\lambda k:k\in K\}$ convexifies $A$ by Minkowski summation. The quantity $c^{(K)}(A)$ acts as a threshold for when the sum $A+\lambda K$ is convex or nonconvex: if $c^{(K)}(A)<\infty$, then choose $\lambda>c^{(K)}(A)$. By definition, there exists $c^{(K)}(A)\leq r<\lambda$ such that $A+rK$ is convex. Then 
\begin{align*}
    A+\lambda K=(A+rK)+(\lambda-r)K,
\end{align*}
which is a convex set since both sets on the right side are convex. Note that in the case $K=\textup{conv}(A)$, we obtain the nonconvexity index defined by Schneider in \cite{schneider1}.

We observe that $c^{(K)}(A)$ is not necessarily finite. For example, take $A:=\{(0,0),(1,0)\}$, and take $K:=B_2^2$ to be the unit $\ell_2$-ball of $\mathbb{R}^2$. Then $c^{(K)}(A)=\infty$. The natural problem is therefore to find conditions on the pair $(A,K)$ for which $c^{(K)}(A)$ is finite. Our main results show that the finiteness of $c^{(K)}(A)$ is equivalent to a uniform covering property of the exposed sets of $A$ and $K$. The results are formulated as follows.

For a nonempty compact convex $G\subset\mathbb{R}^n$ with $0\in \textup{relint}(G)$, and for a nonempty compact $B\subset\mathbb{R}^n$, define the $G$-covering radius of $B$ by 
\begin{align*}
    \rho^{(G)}(B):=\inf\{r>0:\textup{conv}(B)\subset B+rG\}.
\end{align*}
Note that $\rho^{(G)}(B)$ may not be finite. We discuss this aspect of the problem in Section \ref{subsection:lower_dimensional_convexifiers}. Let $\mathbb{S}^{n-1}$ denote the unit sphere of $\mathbb{R}^n$. For a nonempty compact set $S\subset\mathbb{R}^n$ and direction $u\in \mathbb{S}^{n-1}$, define
\begin{align*}
    S_u:=\{x\in S:\langle x,u\rangle=h_S(u)\}
\end{align*}
to be the exposed set of $S$ in direction $u$, where $h_S(u):=\sup_{x\in S}\langle x,u\rangle$ is the support function of $S$. The first of our main results characterizes finiteness of $c^{(K)}(A)$ according to the universal covering $K_u$-radii
\begin{align*}
    \Delta^{(K_u)}(A_u):=\sup_{p\in \textup{relint}(K_u)}\rho^{(K_u-p)}(A_u),\qquad u\in \mathbb{S}^{n-1}
\end{align*}
of the exposed sets $A_u$.

\begin{theorem}\label{theorem:finite_condition_universal_translate}
    Let $K\subset\mathbb{R}^n$ be a compact convex set with $\textup{dim}(K)\geq 1$, and let $A\subset\mathbb{R}^n$ be a nonempty compact set. Then
    \begin{align*}
        c^{(K)}(A)<\infty \iff \sup_{u\in \mathbb{S}^{n-1}}\Delta^{(K_{u})}(A_u)<\infty.
    \end{align*}
\end{theorem}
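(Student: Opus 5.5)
The plan rests on the identity $(A+\lambda K)_u=A_u+\lambda K_u$, valid for every $u\in\mathbb{S}^{n-1}$ and $\lambda>0$ because support functions add. Both directions will be reduced to statements about exposed sets through this identity.

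\emph{Forward direction.} Suppose $c^{(K)}(A)<\infty$ and fix $\lambda$ such that $C:=A+\lambda K$ is convex.
\begin{enumerate}
\item For each $u$, the exposed set $C_u=A_u+\lambda K_u$ is convex, since an exposed face of a convex set is convex.
\item Hence $\textup{conv}(A_u)\subset A_u+\lambda K_u$.
\item This inclusion does not yet bound $\Delta$, because $\rho^{(K_u-p)}$ uses the translate $K_u-p$, and $p$ may approach the relative boundary of $K_u$. Translating gives $\textup{conv}(A_u)+\lambda p\subset A_u+\lambda K_u$, but we need $\textup{conv}(A_u)\subset A_u+r(K_u-p)$.
\item To get this, I would iterate. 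Convexity of $A_u+\lambda K_u$ gives $A_u+m\lambda K_u$ convex for every integer $m\ge1$ (add the convex set $(m-1)\lambda K_u$).
\item Write $x\in\textup{conv}(A_u)$ as $x=(x+m\lambda p)-m\lambda p$ with $p\in\textup{relint}(K_u)$. One needs $x+m\lambda p\in A_u+m\lambda K_u$.
\item This membership holds because $x+m\lambda p\in \textup{conv}(A_u)+m\lambda K_u=\textup{conv}(A_u+m\lambda K_u)=A_u+m\lambda K_u$.
\item Therefore $\textup{conv}(A_u)\subset A_u+m\lambda(K_u-p)$ for every $m\ge1$. Taking $m=1$ already gives $\rho^{(K_u-p)}(A_u)\le\lambda$ for all $p$ and $u$, so $\sup_u\Delta^{(K_u)}(A_u)\le\lambda<\infty$. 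The forward direction is therefore easy.
\end{enumerate}

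\emph{Reverse direction.} Suppose $M:=\sup_u\Delta^{(K_u)}(A_u)<\infty$. The goal is to show that $A+\lambda K$ is convex for some $\lambda$ of order $M$ (say $\lambda=2M$ or larger). I would argue by contradiction via separation, which I think is the cleanest route.
\begin{enumerate}
\item If $A+\lambda K$ is not convex, take $y\in\textup{conv}(A)+\lambda K$ with $y\notin A+\lambda K$. Here $y=x+\lambda k$ with $x=\sum t_ia_i$ a convex combination.
\item The aim is to show that $x$ lies in $A+\lambda(K-k)$-type sets. Using the covering hypothesis directly on the whole set is not available: the hypothesis is only about exposed faces, and $u$ ranges over all directions, including generic ones where $A_u$ is a single point.
\item Instead, I would use a compactness and induction-on-faces argument.
\item Consider the ``hole'' $H=\textup{conv}(A+\lambda K)\setminus(A+\lambda K)$, suppose it is nonempty, and pick the direction $u$ and point $y$ of $\overline{H}$ maximizing $\langle y,u\rangle$ relative to the supporting structure.
\item Near a boundary point of $\textup{conv}(A+\lambda K)$, the local structure is governed by the exposed faces $A_u+\lambda K_u$, which the hypothesis shows are convex. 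Indeed, $\textup{conv}(A_u)\subset A_u+M(K_u-p)$, and choosing $p$ appropriately with $\lambda\ge M$ gives $\textup{conv}(A_u)+\lambda K_u\subset A_u+\lambda K_u$.
\end{enumerate}

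The main obstacle is passing from convexity of every exposed face $A_u+\lambda K_u$ to convexity of $A+\lambda K$ itself. I would handle this by induction on dimension: restrict to the affine hull of a face, where the faces of $A_u$ within $K_u$ are again exposed sets (iterated faces). The uniformity in $p$ (the supremum over $\textup{relint}(K_u)$) is exactly what survives this restriction, since the faces of $K_u$ in sub-directions are $K_{u,v}$, and $p$ must range over them.

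For the interior part, I would use a Hausdorff-distance or compactness argument: the hole $H$, if nonempty, has a point $y$ of $\overline H$ farthest in some direction $u$. One then shows that $y$ is eventually covered using the face $A_u+\lambda K_u$ together with a slightly shifted copy of $\lambda K$, with the slack coming from the relint condition. A large enough $\lambda$ absorbs the difference between $\lambda$ and $M$. I expect this limiting step, where faces degenerate as the direction varies, to be the genuinely delicate part.
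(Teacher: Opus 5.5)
\textbf{Verdict: the forward direction is fine, but the reverse direction, which is the substance of the theorem, is not proved.}

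Your forward direction is correct and is essentially the paper's. The detour through multiples $m\lambda$ is unnecessary: $A_u+\lambda(K_u-p)$ is a translate of the convex set $A_u+\lambda K_u$ and contains $A_u$, hence contains $\textup{conv}(A_u)$.

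Step 5 of the reverse direction reaches the right intermediate statement: $A_u+rK_u=\textup{conv}(A_u)+rK_u$ for every $u$ and every $r>M$. However, this needs the union of the inclusions $\textup{conv}(A_u)+rp\subset A_u+rK_u$ over \emph{all} $p\in\textup{relint}(K_u)$, followed by a closure, as in Lemma \ref{lemma:cover_convex_index_equivalence}. No single well-chosen $p$ gives it. The passage from convexity of all face sums to convexity of $A+DK$ is the whole content of the theorem, and you leave it as an unexecuted plan.

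The mechanisms you sketch do not work as stated:
\begin{enumerate}
\item A point of $\overline{H}$ extremal in direction $u$ need not lie on $\partial\,\textup{conv}(A+\lambda K)$, since the hole may be an interior cavity. Exposed faces say nothing there.
\item Induction on iterated faces is beside the point. Face convexity is already in hand, and relative interior points of $K_u$ never lie in proper subfaces $K_{(u,v)}$.
\item Your target ``$\lambda$ of order $M$'' is false. Take $A=\partial[0,s]^2$ and $K=[0,1]^2$. Every $A_u$ is convex, so $M=0$, yet the center of the square is not in $A+\lambda K$ for $\lambda<s/2$.
\end{enumerate}

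Two concrete ingredients are missing; both are in Lemma \ref{lemma:finite_convex_faces_condition}. Translate so that $K=K^*$, and set $L=\textup{aff}(K)$ and $C=\textup{conv}(A)$.

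(i) \emph{Interior covering.} You need $C\subset A+RK$ for some finite $R$. When $\dim K<n$ this is not a compactness or Hausdorff-distance matter. By Proposition \ref{theorem:hausdorff_distance_finite_condition} it is the structural condition that $x+L$ meets $A$ for every $x\in C$. The paper gets this for $x\in C_u$ from $A_u+\mu K_u=C_u+\mu K_u$: write $x+\mu p=a+\mu q$, so $x-a\in L$. For interior $x$ it moves along some $v\in L\setminus\{0\}$ until it hits $\partial C$.

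(ii) \emph{Boundary layer.} Lemma \ref{lemma:characterizing_sumset_lemma} places every point of $(C+DK)\setminus C$ in $\textup{conv}(C_u,(C+DK)_u)=\bigcup_{t\in[0,1]}(C_u+(1-t)DK_u)$ for a single $u$. Face convexity only handles $C_u+sK_u$ with $s>\Lambda$, so the layers close to $C_u$ need extra room. This room is supplied uniformly in $u$ by Lemma \ref{lemma:translate_face_containment}, namely $\frac{1}{m+1}(K_u-g(K_u))\subset K$. That gives $A_u+D\,\textup{conv}\bigl(\tfrac{1}{m+1}(K_u-g(K_u)),K_u\bigr)\subset A+DK$, which covers every layer once $D>\max\{R,(m+1)\Lambda\}$.

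This slack comes from the centroid geometry of $K$, not from the supremum over $p$. The supremum's only role is to produce face convexity.
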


In other words, to check that $A+\lambda K$ is convex for sufficiently large $\lambda$, it suffices to study the exposed faces of $A$ and $K$, and moreover, we only need to determine the covering condition $A_u+r(K_u-p)\supset \textup{conv}(A_u)$ for a bounded $r$, and it is not necessary to know whether $A_u+r(K_u-p)$ is convex.

It is natural to ask whether it is possible to strengthen Theorem \ref{theorem:finite_condition_universal_translate} by replacing the uniform boundedness of the uniform covering radii $\Delta^{(K_u)}(A_u)$ with a uniform boundedness condition on the covering radii $\rho^{(K_u-p_u)}(A_u)$ for fixed translates $p_u\in \textup{relint}(K_u)$. We obtain a statement of this type that depends on the iterated covering radii, defined as follows. For $2\leq j\leq n-1$, let $O_j$ be the collection of all $j$-tuples $(u_1,\dots,u_j)$ of vectors $u_i\in \mathbb{S}^{n-1}$ that are pairwise orthogonal. If $j=1$, then set $O_1:=\mathbb{S}^{n-1}$. For a set $S\subset \mathbb{R}^n$ and $\textbf{u}\in O_j$, the iterated exposed set of $S$ is defined inductively by
\begin{align*}
    S_{\textbf{u}}:=(S_{(u_1,\dots,u_{j-1})})_{u_j}.
\end{align*}
The relative centroid of a nonempty compact convex $G\subset\mathbb{R}^n$ is defined by
\begin{align*}
    g(G):=\frac{1}{|G|_k}\int_{G}x\hspace{0.5mm}dx,\qquad k=\textup{dim}(G),
\end{align*}
 where $|\cdot |_k$ is $k$-dimensional volume in $\textup{aff}(G)$. We denote $G^*:=G-g(G)$ to be the translate of $G$ whose relative centroid is $0$. With this notation, we prove the following centroid version of Theorem \ref{theorem:finite_condition_universal_translate}.

\begin{theorem}\label{theorem:main_theorem}
    Let $K\subset\mathbb{R}^n$ be a nonempty compact convex set with $\textup{dim}(K)\geq 1$, and let $A\subset\mathbb{R}^n$ be nonempty and compact. Then
    \begin{align}\label{eq:covering_radius_finite_condition}
        c^{(K)}(A)<\infty \iff \sup_{u\in O_j}\rho^{(K_{\textbf{u}}^*)}(A_{\textbf{u}})<\infty \textup{ for all } 1\leq j\leq n-1.
    \end{align}
\end{theorem}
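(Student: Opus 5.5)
We reduce Theorem \ref{theorem:main_theorem} to Theorem \ref{theorem:finite_condition_universal_translate}. The plan is to compare the universal covering radii $\Delta^{(K_u)}(A_u)$ with the centroid-based radii $\rho^{(K_{\textbf{u}}^*)}(A_{\textbf{u}})$ of the iterated exposed sets.

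For the direction ($\Rightarrow$), assume $c^{(K)}(A)<\infty$. First we check that finite convexification passes to exposed sets. If $A+\lambda K$ is convex, then for every $u$ we have $(A+\lambda K)_u=A_u+\lambda K_u$, and an exposed set of a convex set is convex. Iterating this along $\textbf{u}=(u_1,\dots,u_j)\in O_j$ shows that $A_{\textbf{u}}+\lambda K_{\textbf{u}}$ is convex, so $c^{(K_{\textbf{u}})}(A_{\textbf{u}})\le c^{(K)}(A)$. Now apply Theorem \ref{theorem:finite_condition_universal_translate} to the pair $(A_{\textbf{u}},K_{\textbf{u}})$. This needs $\dim K_{\textbf{u}}\ge1$; when $\dim K_{\textbf{u}}=0$ the set $A_{\textbf{u}}$ must already be convex, so its radius is $0$ by convention. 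We need a quantitative form of the theorem: if $A+\lambda K$ is convex then $\rho^{(K_u-p)}(A_u)\le C\lambda$ for every $p\in\textup{relint}(K_u)$. I expect the proof of Theorem \ref{theorem:finite_condition_universal_translate} to give this, or at least a bound depending only on $\lambda$. Taking $p=g(K_{\textbf{u}})$ then bounds $\rho^{(K_{\textbf{u}}^*)}(A_{\textbf{u}})$ uniformly in $\textbf{u}$.

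For the direction ($\Leftarrow$), by Theorem \ref{theorem:finite_condition_universal_translate} it suffices to bound $\rho^{(K_u-p)}(A_u)$ uniformly over $u$ and $p\in\textup{relint}(K_u)$. The centroid plays a key role here. By the Minkowski--Radon inequality, $-(K_u^*)\subset k\,K_u^*$ with $k=\dim K_u$. Hence any point $p$ that is deep inside $K_u$ satisfies $K_u^*\subset C(K_u-p)$, and for such $p$ the covering by $K_u^*$ transfers to $K_u-p$. The difficulty is points $p$ near the relative boundary of $K_u$. There $K_u-p$ becomes very thin in the directions pointing outward from $p$. To handle this, write $p$ as close to a face $F=(K_u)_{v}$ with $v\perp u$. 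Then cover $\textup{conv}(A_u)$ in two stages:
\begin{enumerate}
\item Points of $\textup{conv}(A_u)$ far from the exposed set $(A_u)_v$ in direction $v$ are covered using $K_u^*$ together with the large room available in direction $-v$.
\item Points near the $v$-extreme part are handled by the hypothesis on $\rho^{(K_{(u,v)}^*)}(A_{(u,v)})$, together with induction on $j$ (the depth of iteration).
\end{enumerate}
The induction runs on dimension. The statement for the faces $K_{(u,v)}$ gives universal radii for those faces, and the base case is faces of dimension $\le1$, where a centroid-based bound is already universal up to a constant.

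The main obstacle is this boundary step. One must show that a uniform bound at the level of the face $(A_u)_v$ plus $K_u^*$ yields a uniform covering for $K_u-p$ near that face. Doing so requires a compactness or continuity argument in the direction $v$: exposed sets are only upper semicontinuous, so points $x\in\textup{conv}(A_u)$ that are merely near-extreme in direction $v$ must be related to the actual exposed set $(A_u)_v$. I would attempt this by contradiction. Take sequences $u_m$, $p_m$, $x_m$ with exploding radii, pass to convergent subsequences, and show that the limit configuration violates the bound at some iterated level $\textbf{u}\in O_j$. This limiting argument, which tracks how $K_{u_m}$ and $A_{u_m}$ degenerate, is where the real work lies.
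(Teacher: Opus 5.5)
Your ($\Rightarrow$) direction is correct, and it needs neither Theorem~\ref{theorem:finite_condition_universal_translate} nor an unspecified constant. If $A_{\textbf{u}}+\lambda K_{\textbf{u}}$ is convex, so is $A_{\textbf{u}}+\lambda K_{\textbf{u}}^*$. This set contains $A_{\textbf{u}}$, hence $\textup{conv}(A_{\textbf{u}})$, so $\rho^{(K_{\textbf{u}}^*)}(A_{\textbf{u}})\le\lambda$.

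The genuine gap is in ($\Leftarrow$). The ``boundary step'' you defer is the entire content of that direction, and the compactness argument you sketch does not supply it. Large covering radii need not survive passage to a limit direction: in the example of Section~\ref{subsection:uniform_boundedness_essential} the radii along $u_j$ are $2^j$, yet at every limit direction the radius is finite. Exposed sets can also jump in the limit. So there is no mechanism by which a sequence $(u_m,p_m,x_m)$ with exploding radii yields a single $\textbf{u}\in O_j$ at which the assumed bound fails. What is needed is a quantitative inequality linking consecutive levels.

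Note also that the base case cannot be ``$\textup{dim}(K_{\textbf{u}})\le 1$''. For $n\ge3$ one can have $K_u$ a segment and $A_u$ two-dimensional with $\rho^{(K_u^*)}(A_u)<\infty$ but $\rho^{(K_u-p)}(A_u)\to\infty$ as $p$ approaches an endpoint of $K_u$. This happens because the sections of $A_u$ by lines parallel to $K_u$ need not reach the ends of the corresponding sections of $\textup{conv}(A_u)$; only the next-level hypothesis excludes this. The correct base is $j=n-1$. There $A_{\textbf{u}}$ and $K_{\textbf{u}}$ lie on parallel lines, and $\rho^{(K_{\textbf{u}}-p)}(A_{\textbf{u}})$ does not depend on $p$.

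The missing idea is to stop varying $p$. By Lemma~\ref{lemma:cover_convex_index_equivalence}, $\Delta^{(G)}(B)=c^{(G)}(B)$. So bounding the universal radii amounts to proving that $A_{\textbf{u}}+DK_{\textbf{u}}$ is convex, and this can be checked with the single translate $K_{\textbf{u}}^*$. The paper uses the decomposition $V+K=V\cup\bigcup_{v}\textup{conv}(V_v,(V+K)_v)$ of Lemma~\ref{lemma:characterizing_sumset_lemma}, applied inside $H_{\textbf{u}}$ to $V=\textup{conv}(A_{\textbf{u}})$ and $DK_{\textbf{u}}^*$:
\begin{itemize}
\item The piece $V$ lies in $A_{\textbf{u}}+DK_{\textbf{u}}^*$ once $D>\rho^{(K_{\textbf{u}}^*)}(A_{\textbf{u}})$.
\item Each segment piece $\textup{conv}(V_v,(V+DK_{\textbf{u}}^*)_v)$ is covered using convexity of $A_{(\textbf{u},v)}+rK_{(\textbf{u},v)}$ for all large $r$. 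This is combined with the centroid containment $\frac{1}{m+1}(G_v-g(G_v))\subset G$ for $g(G)=0$ (Lemma~\ref{lemma:translate_face_containment}).
\end{itemize}

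So you already hold the right ingredient, the Minkowski--Radon containment. But it must be used to place a recentred copy of each face inside $K_{\textbf{u}}^*$, not to compare different translates of the same face; that is what absorbs your thin translates near the relative boundary. Set $R_j=\sup_{\textbf{u}\in O_j}\rho^{(K_{\textbf{u}}^*)}(A_{\textbf{u}})$ and $C_j=\sup_{\textbf{u}\in O_j}c^{(K_{\textbf{u}})}(A_{\textbf{u}})$. The argument gives Lemma~\ref{lemma:finite_convex_faces_condition} and the explicit recursion $C_j\le\max\{R_j,(1+n-j)C_{j+1}\}$. This is run backwards from $C_{n-1}=R_{n-1}$ down to $C_1=\Lambda^{(K)}(A)<\infty$, with no limiting argument at all.
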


 The uniform boundedness is essential, as we show in Section \ref{subsection:uniform_boundedness_essential}. We do not know if it is necessary to check condition \eqref{eq:covering_radius_finite_condition} for all $j$, or if it suffices to verify this condition for $j=1$. We are able to show that $j=1$ suffices in the case where $K$ and $\textup{conv}(A)$ are convex polytopes.

\begin{theorem}\label{theorem:polygone_equivalence}
    Let $K\subset\mathbb{R}^n$ be a compact convex set with $\textup{dim}(K)\geq 1$, and let $A\subset\mathbb{R}^n$ be a nonempty compact set. If $K$ and $\textup{conv}(A)$ are convex polytopes, then 
    \begin{align*}
        c^{(K)}(A)<\infty \iff \sup_{u\in \mathbb{S}^{n-1}}\rho^{(K_u^*)}(A_u)<\infty.
    \end{align*}
\end{theorem}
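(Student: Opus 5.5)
We derive Theorem \ref{theorem:polygone_equivalence} from Theorem \ref{theorem:main_theorem}. The forward implication is immediate: if $c^{(K)}(A)<\infty$, then condition \eqref{eq:covering_radius_finite_condition} holds for every $j$, in particular for $j=1$. For the converse we assume $M:=\sup_{u\in\mathbb{S}^{n-1}}\rho^{(K_u^*)}(A_u)<\infty$. We then verify \eqref{eq:covering_radius_finite_condition} for every $2\le j\le n-1$, and Theorem \ref{theorem:main_theorem} finishes the argument.

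The key step is to reduce each iterated exposed set to a first-level one. We use that $K$ and $P:=\textup{conv}(A)$ are polytopes, so their normal fans are finite. Consider the common refinement of the two normal fans; its cones correspond to pairs of faces $(F,G)$ with $F$ a face of $K$ and $G$ a face of $P$. Fix $\textbf{u}=(u_1,\dots,u_j)\in O_j$. We claim that there is a single direction $v\in\mathbb{S}^{n-1}$ with
\begin{align*}
    K_{\textbf{u}}=K_v,\qquad A_{\textbf{u}}=A_v .
\end{align*}
Take $v$ proportional to $u_1+\varepsilon u_2+\cdots+\varepsilon^{j-1}u_j$ with $\varepsilon>0$ small. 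For a polytope, iterated exposure is lexicographic maximization, and a sufficiently small $\varepsilon$ reproduces it. For $K$ and for $P$ this is standard, because each has finitely many vertices and the perturbation only has to separate finitely many values.

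For $A$ itself we must be more careful. First, $A_u=A\cap P_u$, since $h_A=h_P$. Next, $P_{\textbf{u}}$ is a face of $P$, and $A_{\textbf{u}}=A\cap P_{\textbf{u}}$ follows by induction on $j$, because $h_{A_w}=h_{\textup{conv}(A_w)}$ and $\textup{conv}(A\cap F)=F$ for every face $F$ of $P$. Hence $A_{\textbf{u}}=A\cap P_v=A_v$ once $P_{\textbf{u}}=P_v$.

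The threshold for $\varepsilon$ depends on $\textbf{u}$, but it only needs to exist for each $\textbf{u}$ separately, so this causes no difficulty. With the claim in hand, $K^*_{\textbf{u}}=K^*_v$, because both are the same face translated by its own relative centroid. Therefore
\begin{align*}
    \rho^{(K_{\textbf{u}}^*)}(A_{\textbf{u}})=\rho^{(K_v^*)}(A_v)\le M,
\end{align*}
and the supremum over $O_j$ is at most $M$ for every $j$.

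The main obstacle is justifying the lexicographic perturbation uniformly for the two polytopes at once, and making sure that $A_{\textbf{u}}$ is taken with respect to $A$ rather than $\textup{conv}(A)$. To handle both, we would phrase everything in terms of the finite vertex sets $V(K)$ and $V(P)$. Choose $\varepsilon$ smaller than
\begin{align*}
    \delta\big/\big(\textstyle 1+\max|x|\big),
\end{align*}
where $\delta$ is the smallest positive gap among the finitely many values $\langle x-y,u_i\rangle$ for $x,y\in V(K)\cup V(P)$. The vertices of $K_v$ and of $P_v$ are then exactly the lexicographic maximizers. Since exposed faces of polytopes are convex hulls of their maximizing vertices, this gives $K_v=K_{\textbf{u}}$ and $P_v=P_{\textbf{u}}$.
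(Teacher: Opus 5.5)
This is essentially the paper's argument: the lexicographic direction $u_1+\varepsilon u_2+\cdots+\varepsilon^{j-1}u_j$ recovers $K_{\textbf{u}}$ and $\textup{conv}(A)_{\textbf{u}}$ simultaneously for small $\varepsilon$ (Lemmas \ref{lemma:one_level_polytope} and \ref{lemma:polytope_equivalence_simultaneous}), the identity $A_{\textbf{u}}=A\cap\textup{conv}(A)_{\textbf{u}}$ transfers this to $A$, and Theorem \ref{theorem:main_theorem} gives both directions. The one slip is your explicit bound $\varepsilon<\delta/(1+\max|x|)$, which does not suffice because it ignores $|\langle x-y,u_i\rangle|\le 2\max|x|$ and the geometric tail (for the segment with vertices $(\tfrac12,-5,0)$, $(-\tfrac12,5,0)$ and $u_1=e_1$, $u_2=e_2$, the value $\varepsilon=0.15$ selects the wrong vertex); a bound such as $\varepsilon<\min\{\tfrac12,\delta/(1+4\max|x|)\}$ works, and your qualitative "finitely many values, so small $\varepsilon$ works" argument already suffices without any explicit constant.
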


The convexification effect of Minkowski summation is classical. It follows from the Shapley--Folkman--Starr theorem \cite{starr1} and the work of Emerson and Greenleaf \cite{emerson1} that the Minkowski averages $A(m):=m^{-1}\sum_{i=1}^{m}A$ of a compact set approach the convex hull of $A$ in Hausdorff distance as $m\rightarrow\infty$. This phenomenon has been studied quantitatively through several measures of nonconvexity. See for example \cite{cassels1,Dyn--Farkhi-2005,fradelizi1}. Recent work in this direction includes results on Hausdorff convexification \cite{meyer3,vanhintum3}, and convexification in Banach spaces and infinite dimensional spaces \cite{artstein2,artstein3}. This paper in contrast studies the question of finite convexification after adding a sufficiently large scaling of a convex set to a nonconvex set. As mentioned above, we recover the nonconvexity index of Schneider \cite{schneider1} when $K=\textup{conv}(A)$. Also of interest is the paper \cite{AFS-2025}, where Artstein-Avidan, Falah, and Slomka study an equivalent formulation of Schneider's index when $A$ is the vertex set of a polytope, and the paper \cite{meyer2} where we quantify the nonconvexity index of Minkowski sums of compact sets.

\subsection{Statement on AI usage} The author used an LLM as a research-assistance tool to help shorten proofs, improve exposition, search for references, and suggest auxiliary lemmas. Notably, the LLM independently found Lemmas \ref{lemma:cover_convex_index_equivalence}, \ref{lemma:one_level_polytope}, and \ref{lemma:polytope_equivalence_simultaneous}, and also Corollary \ref{corollary:projection_covering_condition}. The author is responsible for developing the research problem and mathematical framework. All content in this article was independently reconstructed and verified by the author.

\subsection{Acknowledgments and funding} The author thanks the laboratories LAMA at University Gustave Eiffel, and the Institut Mathématique de Jussieu for their hospitality. The author is supported by the National Science Foundation through the MSPRF program (award number: 2502794).

\section{Special cases}

We begin by studying a few natural cases: strictly convex $K$, lower-dimensional $K$, and a planar convex body $K$.

The following identities will be useful throughout the paper. For $\textbf{u}\in O_j$, note that
\begin{align}\label{eq:usefl_face_identities}
    \textup{conv}(A_{\textbf{u}})=(\textup{conv}(A))_{\textbf{u}},\qquad A_{\textbf{u}}=A\cap (\textup{conv}(A))_{\textbf{u}},
\end{align}
which follow from the definition of iterated exposed sets. 

\subsection{Strictly convex convexifiers}

The set $K\subset\mathbb{R}^n$ is a convex body if it is compact convex with nonempty interior. The convex body $K$ has strictly convex boundary if its boundary does not contain any nondegenerate segments. 

\begin{proposition}\label{proposition:strictly_convex}
    Let $K\subset\mathbb{R}^n$ be a convex body, and let $A\subset\mathbb{R}^n$ be a full-dimensional compact set. If $K$ has strictly convex boundary, then 
    \begin{align*}
        c^{(K)}(A)<\infty \iff \partial \textup{conv}(A)\subset A.
    \end{align*}
\end{proposition}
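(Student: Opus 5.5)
Since $K$ has strictly convex boundary, every exposed face $K_u$, $u\in\mathbb{S}^{n-1}$, is a single point $\{k_u\}$. For any $p\in\textup{relint}(K_u)=\{k_u\}$ we get $K_u-p=\{0\}$. Hence $A_u+r(K_u-p)=A_u$ for every $r>0$, and so
\begin{align*}
\rho^{(K_u-p)}(A_u)=\begin{cases}0 & \text{if } \textup{conv}(A_u)=A_u,\\ \infty & \text{otherwise.}\end{cases}
\end{align*}
Theorem \ref{theorem:finite_condition_universal_translate} applies because $\dim K=n\geq 1$. It therefore reduces the statement to showing that
\begin{align*}
A_u=\textup{conv}(A_u)\ \text{ for every } u\in\mathbb{S}^{n-1} \iff \partial\,\textup{conv}(A)\subset A.
\end{align*}
By \eqref{eq:usefl_face_identities}, $\textup{conv}(A_u)=(\textup{conv}(A))_u$ and $A_u=A\cap(\textup{conv}(A))_u$. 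So the left side says exactly that $(\textup{conv}(A))_u\subset A$ for every $u$.

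Now write $C:=\textup{conv}(A)$. Since $A$ is full-dimensional, $C$ is a convex body, and its boundary is the union of its exposed faces:
\begin{align*}
\partial C=\bigcup_{u\in\mathbb{S}^{n-1}}C_u.
\end{align*}
This holds because every boundary point lies on a supporting hyperplane, and every exposed face of a convex body lies in its boundary. Hence $C_u\subset A$ for all $u$ if and only if $\partial C\subset A$, which finishes the equivalence.

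The only step needing care is the use of Theorem \ref{theorem:finite_condition_universal_translate} with degenerate $G=K_u-p=\{0\}$. The covering radius was defined for $0\in\textup{relint}(G)$, and here $\textup{relint}\{0\}=\{0\}$, so the value is $0$ or $\infty$ as computed above. If this degenerate use is a concern, I would instead prove both directions by hand.

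For ($\Rightarrow$), suppose $A+\lambda K$ is convex. Then $(A+\lambda K)_u=A_u+\lambda k_u$ is an exposed face of a convex set, so it is convex. Hence $A_u$ is convex, i.e.\ $A_u=C_u$.

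For ($\Leftarrow$), I would show that for large $\lambda$ every point of $C+\lambda K$ lies in $A+\lambda K$. Points near $\partial C$ are handled using $\partial C\subset A$. The interior of $C$ is covered once $\lambda K$ contains a ball of radius $\textup{diam}(C)$: this holds for large $\lambda$ since $K$ has interior, and then $C\subset a+\lambda K$ for any single $a\in A$ after a translation argument. This hand argument is the likely main obstacle, but it is unnecessary if the theorem is applied directly.
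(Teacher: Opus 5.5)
Your proof is correct and has the same shape as the paper's. You reduce to a main characterization theorem, observe that strict convexity forces every exposed face $K_u$ to be a point so the relevant covering radii are either $0$ or $\infty$, and finish with $\partial C=\bigcup_u C_u$ for the convex body $C=\textup{conv}(A)$.

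The one real difference is the theorem you cite. You use Theorem~\ref{theorem:finite_condition_universal_translate}, while the paper uses Theorem~\ref{theorem:main_theorem}. Since $K_u-p=K_u^*=\{0\}$, the two conditions coincide at level $j=1$, so your route only ever needs first-level exposed sets. The paper's route also has to check, for the ($\Leftarrow$) direction, that every iterated set $A_{\textbf{u}}$ is convex. That check is easy: $C_{\textbf{u}}\subset C_{u_1}\subset\partial C\subset A$ gives $A_{\textbf{u}}=A\cap C_{\textbf{u}}=C_{\textbf{u}}$. The paper's route also rests on the backward induction behind Theorem~\ref{theorem:main_theorem}, which is heavier machinery than this statement needs. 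So your citation is the more economical one.

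The degenerate use of $\rho^{(\{0\})}$ that worried you is legitimate. We have $0\in\textup{relint}\{0\}$, Lemma~\ref{lemma:cover_convex_index_equivalence} holds for a one-point $G$, and the paper itself uses $\rho^{(\{0\})}$ in exactly this way. Your fallback hand argument is therefore unnecessary.
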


\begin{proof}
    Suppose that $\partial \textup{conv}(A)\subset A$. Then for each $1\leq j\leq n-1$ and each $\textbf{u}\in O_j$, the set $A_{\textbf{u}}$ is convex. Then $\rho^{(K_{\textbf{u}}^*)}(A_{\textbf{u}})=0$ for all $\textbf{u}$, and therefore 
    \begin{align*}
        \sup_{\textbf{u}\in O_j}\rho^{(K_{\textbf{u}}^*)}(A_{\textbf{u}})<\infty,\qquad 1\leq j\leq n-1.
    \end{align*}
    It follows from Theorem \ref{theorem:main_theorem} that $c^{(K)}(A)<\infty$.

    For the other direction, suppose that $c^{(K)}(A)<\infty$. Then by Theorem \ref{theorem:main_theorem}, $\rho^{(K_{\textbf{u}}^*)}(A_{\textbf{u}})<\infty$ for each $\textbf{u}\in O_j$ and $1\leq j\leq n-1$. By the assumption that $K$ is strictly convex, $K_{\textbf{u}}^*=\{0\}$, and therefore $A_{\textbf{u}}$ is convex for each $\textbf{u}$. If we set $C:=\textup{conv}(A)$, then it follows that $C_{\textbf{u}}=A_{\textbf{u}}$ for each $\textbf{u}$, and 
    \begin{align*}
        \partial C&=\bigcup_{u\in\mathbb{S}^{n-1}}C_{u}\subset A.
    \end{align*}
\end{proof}

\subsection{Lower dimensional convexifiers}\label{subsection:lower_dimensional_convexifiers}

It was mentioned in the introduction that the $G$-covering radius of a compact set $B$ by a convex $G$ may not be finite. The following results show that $\rho^{(G)}(B)<\infty$ if and only if the orthogonal projection of $A$ onto the orthogonal complement of the affine hull of $G$ is a convex set. We denote the affine hull of $G$ by $\textup{aff}(G)$.

\begin{proposition}\label{theorem:hausdorff_distance_finite_condition}
    Let $G\subset \mathbb{R}^n$ be a nonempty compact convex set with $\textup{dim}(G)\geq 1$ that contains $0$ in its relative interior, and let $A\subset\mathbb{R}^n$ be a nonempty compact set. Then
    \begin{align*}
        \rho^{(G)}(A)<\infty \iff (x+\textup{aff}(G))\cap A\neq \varnothing \textup{ for all } x\in \textup{conv}(A).
    \end{align*}
\end{proposition}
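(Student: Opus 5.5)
We prove the two implications separately. Write $L:=\textup{aff}(G)$; since $0\in\textup{relint}(G)$, $L$ is a linear subspace, and $G$ contains a relative ball: there is $\varepsilon>0$ with $B_\varepsilon\cap L\subset G$, where $B_\varepsilon$ is the Euclidean ball of radius $\varepsilon$ centered at $0$.

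\emph{Forward direction.} Suppose $\rho^{(G)}(A)<\infty$. Then there is $r>0$ with $\textup{conv}(A)\subset A+rG$. Take any $x\in\textup{conv}(A)$ and write $x=a+rg$ with $a\in A$ and $g\in G\subset L$. Then $a=x-rg\in x+L$, so $(x+L)\cap A\neq\varnothing$. This direction is immediate.

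\emph{Reverse direction.} Assume every slice $(x+L)\cap A$ with $x\in\textup{conv}(A)$ is nonempty. Because $A$ and $\textup{conv}(A)$ are compact, they lie in a Euclidean ball of radius $R$ about the origin. Fix $x\in\textup{conv}(A)$ and choose $a\in A\cap(x+L)$. Then $x-a\in L$ and $|x-a|\le 2R$, so
\begin{align*}
x-a\in B_{2R}\cap L\subset \tfrac{2R}{\varepsilon}\,G .
\end{align*}
Hence $x\in A+\frac{2R}{\varepsilon}G$. Since $x$ was arbitrary, $\textup{conv}(A)\subset A+\frac{2R}{\varepsilon}G$, and therefore $\rho^{(G)}(A)\le 2R/\varepsilon<\infty$.

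The argument has no real obstacle. The only points to check are that $\textup{aff}(G)$ is linear because $0\in G$, and that a relative ball of $L$ about $0$ lies inside $G$. The latter is exactly what $0\in\textup{relint}(G)$ means.
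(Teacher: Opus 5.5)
Your proof is correct and follows the paper's argument essentially step for step. The forward direction reads off $x-a=rg\in\textup{aff}(G)$ from a representation $x=a+rg$, and the reverse direction absorbs the bounded vector $x-a\in\textup{aff}(G)$ into a dilate of $G$ using a relative ball about $0$; the only cosmetic difference is that you use the constants $2R$ and $\varepsilon$ where the paper uses $\textup{diam}(\textup{conv}(A))$ and $\inf_{x\in\textup{relbd}(G)}\|x\|_2$.
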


\begin{proof}
    Suppose that for every $x\in \textup{conv}(A)$, the affine subspace $x+\textup{aff}(G)$ intersects $A$. Fix $x\in \textup{conv}(A)$ and choose $\alpha \in A\cap (x+\textup{aff}(G))$. Then $x-\alpha \in \textup{aff}(G)$. Since both $x$ and $\alpha$ lie in $\textup{conv}(A)$, 
    \begin{align*}
        \|x-\alpha\|_2\leq \textup{diam}(\textup{conv}(A)).
    \end{align*}
    It follows that if we set
    \begin{align*}
        D:=\textup{diam}(\textup{conv}(A))\qquad \textup{and}\qquad r:=\inf_{x\in \textup{relbd}(G)}\|x\|_2,
    \end{align*}
    then
    \begin{align*}
        x-\alpha\in DB_2^n\cap \textup{aff}(G)\subset \frac{D}{r}G.
    \end{align*}
    Therefore,
    \begin{align*}
        x\in \alpha+\frac{D}{r}G\subset A+\frac{D}{r}G.
    \end{align*}
    Since $x\in\textup{conv}(A)$ was chosen arbitrarily, 
    \begin{align*}
        \textup{conv}(A)\subset A+\frac{D}{r}G,
    \end{align*}
    and we have
    \begin{align*}
        \rho^{(G)}(A)\leq \frac{D}{r}<\infty.
    \end{align*}

    To prove the other direction, suppose that $\rho^{(G)}(A)<\infty$. Then there exists $\lambda>0$ such that $\textup{conv}(A)\subset A+\lambda G$. Fix $x\in \textup{conv}(A)$ and write $x=\alpha+\lambda g$ for some $\alpha\in A$ and $g\in G$. Since $G\subset \textup{aff}(G)$, we have $x-\alpha\in \textup{aff}(G)$. Then $\alpha\in x+\textup{aff}(G)$, and as a consequence $A\cap (x+\textup{aff}(G))\neq \varnothing$.
\end{proof}

\begin{corollary}\label{corollary:projection_covering_condition}
    Let $G\subset\mathbb{R}^n$ be a compact convex set with $\textup{dim}(G)\geq 1$ that contains $0$ in its relative interior. Let $P_{\textup{aff}(G)^{\bot}}$ be the orthogonal projection onto $\textup{aff}(G)^{\bot}$. If $A\subset\mathbb{R}^n$ is a nonempty compact set, then
    \begin{align*}
        \rho^{(G)}(A)<\infty \iff P_{\textup{aff}(G)^{\bot}}(A)\textup{ is convex.}
    \end{align*}
\end{corollary}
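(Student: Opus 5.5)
We derive the corollary from Proposition \ref{theorem:hausdorff_distance_finite_condition}, so it suffices to show
\begin{align*}
    (x+\textup{aff}(G))\cap A\neq\varnothing \textup{ for all } x\in\textup{conv}(A) \iff P(A)\textup{ is convex},
\end{align*}
where we abbreviate $P:=P_{\textup{aff}(G)^{\bot}}$. Since $0\in\textup{relint}(G)$, the set $V:=\textup{aff}(G)$ is a linear subspace. Hence $x+V$ is exactly the fiber $P^{-1}(P(x))$, and $\mathbb{R}^n=V\oplus V^{\bot}$ with $P$ linear with kernel $V$. The key identity is $P(\textup{conv}(A))=\textup{conv}(P(A))$, which holds because $P$ is linear: linear maps commute with convex hulls. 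Everything then becomes a statement about fibers of $P$.

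For $(\Rightarrow)$, assume every $x\in\textup{conv}(A)$ has its fiber $x+V$ meeting $A$. Take $y\in\textup{conv}(P(A))=P(\textup{conv}(A))$ and write $y=P(x)$ with $x\in\textup{conv}(A)$. Choose $\alpha\in A\cap(x+V)$. Then $P(\alpha)=P(x)=y$, so $y\in P(A)$. Thus $\textup{conv}(P(A))\subset P(A)$, and $P(A)$ is convex.

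For $(\Leftarrow)$, assume $P(A)$ is convex and take $x\in\textup{conv}(A)$. Then $P(x)\in P(\textup{conv}(A))=\textup{conv}(P(A))=P(A)$, so there is $\alpha\in A$ with $P(\alpha)=P(x)$. This gives $\alpha-x\in\ker P=V$, so $\alpha\in(x+V)\cap A$.

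Combining both directions with Proposition \ref{theorem:hausdorff_distance_finite_condition} gives the corollary. There is no real obstacle. The only point needing care is that $\textup{aff}(G)$ is a linear subspace, which uses the hypothesis $0\in\textup{relint}(G)$; this makes the translates $x+\textup{aff}(G)$ exactly the fibers of the orthogonal projection onto $\textup{aff}(G)^\bot$. Without this hypothesis the argument would have to be rephrased using the direction space $\textup{aff}(G)-\textup{aff}(G)$.
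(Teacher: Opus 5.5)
Your proof is correct and follows the paper's argument. Both reduce to Proposition~\ref{theorem:hausdorff_distance_finite_condition}, identify $x+\textup{aff}(G)$ with the fiber of the projection through $x$, and use $P(\textup{conv}(A))=\textup{conv}(P(A))$ to turn the fiber condition into $\textup{conv}(P(A))\subset P(A)$; you only write out the two directions separately and state explicitly that $\textup{aff}(G)$ is a linear subspace, which the paper leaves implicit.
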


\begin{proof}
    Recall from Proposition \ref{theorem:hausdorff_distance_finite_condition} that $\rho^{(G)}(A)<\infty$ if and only if $A\cap (x+\textup{aff}(G))\neq \varnothing$ for each $x\in \textup{conv}(A)$. The latter condition is equivalent to saying that $P_{\textup{aff}(G)^{\bot}}(x)\in P_{\textup{aff}(G)^{\bot}}(A)$ for each $x\in \textup{conv}(A)$. That is, $P_{\textup{aff}(G)^{\bot}}(\textup{conv}(A))\subset P_{\textup{aff}(G)^{\bot}}(A)$. Because linearity of $P_{\textup{aff}(G)^{\bot}}$ gives $P_{\textup{aff}(G)^{\bot}}(\textup{conv}(A))=\textup{conv}(P_{\textup{aff}(G)^{\bot}}(A))$, this condition holds if and only if $P_{\textup{aff}(G)^{\bot}}(A)$ is convex. 
\end{proof}

\subsection{The planar case $n=2$}\label{subsection:uniform_boundedness_essential}

We observe that in the case $n=2$, Theorem \ref{theorem:main_theorem} reduces to a simpler statement that compares lengths of the segments $K_u^*$ with the largest gaps in the sets $A_u$.

\begin{theorem}
    Let $K\subset\mathbb{R}^2$ be a compact convex set with $\textup{dim}(K)\geq 1$, and let $A\subset\mathbb{R}^2$ be a nonempty compact set. Then
    \begin{align*}
        c^{(K)}(A)<\infty \iff \exists \lambda>0\textup{ such that } d(A_u)\leq \lambda |K_u^*| \hspace{2mm}\forall u\in \mathbb{S}^1,
    \end{align*}
    where $|S|$ is the Euclidean length of a segment $S$, and $d(S):=d_H(S,\textup{conv}(S))$ is the Hausdorff distance between a set $S$ and its convex hull.
\end{theorem}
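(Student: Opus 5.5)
The plan is to deduce this statement from Theorem \ref{theorem:main_theorem} with $n=2$. In that case the only level is $j=1$, so finiteness of $c^{(K)}(A)$ is equivalent to
\begin{align*}
\sup_{u\in\mathbb{S}^1}\rho^{(K_u^*)}(A_u)<\infty .
\end{align*}
It therefore suffices to compare $\rho^{(K_u^*)}(A_u)$ with $d(A_u)/|K_u^*|$ for each direction $u$. In $\mathbb{R}^2$ the exposed set $A_u$ lies on the support line $H_u=\{x:\langle x,u\rangle=h_A(u)\}$, and $K_u$ is either a point or a segment parallel to $H_u$. In both cases $K_u^*$ is a symmetric segment centered at $0$: namely $K_u^*=\tfrac12|K_u|\,[-e,e]$ with $e$ a unit vector along $H_u$, or $K_u^*=\{0\}$. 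Symmetry holds because the centroid of a segment is its midpoint.

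The first step treats the case where $K_u$ is a segment. Then $A_u+rK_u^*$ is the set of points of $H_u$ within distance $r|K_u|/2$ of $A_u$. Hence $\textup{conv}(A_u)\subset A_u+rK_u^*$ holds exactly when every point of $\textup{conv}(A_u)$ is within $r|K_u|/2$ of $A_u$, that is, when $d(A_u)\le r|K_u|/2$. Since $A_u\subset\textup{conv}(A_u)$, the Hausdorff distance reduces to the one-sided distance, so
\begin{align*}
\rho^{(K_u^*)}(A_u)=\frac{2\,d(A_u)}{|K_u|}.
\end{align*}
Here $d(A_u)$ is half the largest gap of $A_u$.

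The second step treats the degenerate directions where $K_u$ is a point, so $|K_u^*|=0$. The definition of $\rho$ in the paper assumes $0\in\textup{relint}(G)$, and $\{0\}$ qualifies: $\rho^{(\{0\})}(A_u)$ is $0$ if $A_u$ is convex and $\infty$ otherwise, which matches how the proof of Proposition \ref{proposition:strictly_convex} uses it. On the other side, the condition $d(A_u)\le\lambda\cdot 0$ forces $d(A_u)=0$, i.e.\ $A_u$ is convex, since it is compact. So in these directions both conditions say exactly that $A_u$ is convex. I would also check that Theorem \ref{theorem:main_theorem}'s convention agrees with this.

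Combining the two steps, $\sup_u\rho^{(K_u^*)}(A_u)\le 2\lambda$ holds if and only if $d(A_u)\le\lambda|K_u^*|$ for every $u$, and the constants translate by a factor of $2$. The main obstacle is bookkeeping rather than depth: the normalization of $|K_u^*|$ versus its half-length, and the point-face convention above. The substantive input is entirely Theorem \ref{theorem:main_theorem}.
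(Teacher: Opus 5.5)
Your proposal is correct and follows the paper's proof: you reduce via Theorem \ref{theorem:main_theorem} with $n=2$ (only $j=1$) to $\sup_u\rho^{(K_u^*)}(A_u)<\infty$. You then use $\rho^{(K_u^*)}(A_u)\,|K_u^*|=2d(A_u)$ for nondegenerate segments, and the fact that $\rho^{(\{0\})}(A_u)<\infty$ exactly when $A_u$ is convex. The one difference is that you verify the segment identity directly from the neighborhood description of $A_u+rK_u^*$ in the support line, where the paper cites \cite[Lemma 2.1]{meyer2}. The point-face convention you wanted to check is the one used in the base case of the proof of Theorem \ref{theorem:main_theorem}.
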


\begin{proof}
    By Theorem \ref{theorem:main_theorem}, we know that 
    \begin{align*}
        c^{(K)}(A)<\infty \iff \sup_{u\in \mathbb{S}^1}\rho^{(K_u^*)}(A_u)<\infty.
    \end{align*}
    If $K_{u}^*$ is a nondegenerate segment, then by definition we have (see also, \cite[Lemma 2.1]{meyer2})
    \begin{align*}
        \rho^{(K_u^*)}(A_u)|K_u^*|=2d(A_u).
    \end{align*}
    In the case that $K_u^*=\{0\}$, then $\rho^{(K_u^*)}(A_u)$ is finite if and only if $A_u$ is convex, in which case $\rho^{(K_u^*)}(A_u)=d(A_u)=0$. The claim follows immediately from these facts.
\end{proof}

It is natural to ask whether the universal boundedness in Theorem \ref{theorem:main_theorem} is necessary, or if it suffices to check that $\rho^{(K_u^*)}(A_u)<\infty$ for each $u\in \mathbb{S}^{n-1}$. The next proposition gives an example in the planar case that shows the uniform boundedness condition must be satisfied.

\begin{proposition}
    There exists a set $K\subset\mathbb{R}^2$ that is compact and convex with $\textup{dim}(K)=2$, and a nonempty compact set $A\subset\mathbb{R}^2$ that satisfy
    \begin{align*}
        \rho^{(K_u^*)}(A_u)<\infty
    \end{align*}
    for each $u\in \mathbb{S}^{1}$, but
    \begin{align*}
        \sup_{u\in \mathbb{S}^{1}}\rho^{(K_u^*)}(A_u)=\infty.
    \end{align*}
\end{proposition}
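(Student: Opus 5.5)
We use the planar reduction from the proof of the previous theorem. If $K_u^*$ is a nondegenerate segment, then $\rho^{(K_u^*)}(A_u)=2d(A_u)/|K_u^*|$. If $K_u^*=\{0\}$, then $\rho^{(K_u^*)}(A_u)$ is finite exactly when $A_u$ is convex, and then it equals $0$. So we want a convex body $K$ with countably many edges $K_{u_k}$ whose lengths $\ell_k$ tend to zero. We also want a compact $A$ whose hull has edges in the same directions, carrying gaps $g_k$ with $g_k/\ell_k\to\infty$, and with $A_u$ a single point for every other direction $u$.

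\textbf{Construction.} Choose distinct angles $\theta_k\in(0,\pi/2)$ and put $v_k:=(\cos\theta_k,\sin\theta_k)$. The directions $v_k$ are then pairwise nonparallel. Let $u_k$ be a unit vector orthogonal to $v_k$, and let $S_k:=[0,v_k]$. Define
\begin{align*}
C:=B_2^2+\sum_{k\geq 1}k^{-2}S_k,\qquad K:=B_2^2+\sum_{k\geq 1}k^{-3}S_k,
\end{align*}
where the series converge in the Hausdorff metric because the lengths are summable. Both sets are two-dimensional compact convex bodies. Exposed faces are additive under Minkowski sums, including countable convergent ones, since support functions add. The disk has singleton faces, and $(S_j)_u$ is a nondegenerate segment only when $u\perp v_j$. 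Hence:
\begin{itemize}
\item for $u=\pm u_k$, the face $C_u$ is a translate of $k^{-2}S_k$ and $K_u$ is a translate of $k^{-3}S_k$;
\item for every other $u$, both $C_u$ and $K_u$ are single points.
\end{itemize}
Now let $A$ be $\partial C$ with the relatively open middle third of each edge $C_{\pm u_k}$ removed.

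\textbf{Verification.} The removed set is relatively open in $\partial C$, so $A$ is closed, hence compact. The set $A$ contains all extreme points of $C$, so $\textup{conv}(A)=C$. By \eqref{eq:usefl_face_identities}, $A_u=A\cap C_u$.
\begin{itemize}
\item For $u\neq \pm u_k$, the face $A_u$ is a single point. It is convex, so $\rho^{(K_u^*)}(A_u)=0$ even though $K_u^*=\{0\}$.
\item For $u=\pm u_k$, the face $A_u$ is two segments separated by a gap of length $k^{-2}/3$. Hence $d(A_u)=k^{-2}/6$, and
\begin{align*}
\rho^{(K_u^*)}(A_u)=\frac{2d(A_u)}{|K_u^*|}=\frac{k^{-2}/3}{k^{-3}}=\frac{k}{3}.
\end{align*}
\end{itemize}
Every value is finite, but the supremum over $u\in\mathbb{S}^1$ is infinite.

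\textbf{Main obstacle.} The step needing care is justifying that faces of the infinite Minkowski sum are the sums of the faces of the summands. To do this, we write $h_C=h_{B_2^2}+\sum_k k^{-2}h_{S_k}$ with uniform convergence. A point $x=b+\sum_k k^{-2}s_k$ of $C$ attains $h_C(u)$ exactly when each summand attains its own support value. This forces $b$ to be a single point and $s_k\in(S_k)_u$ for each $k$. If one prefers to avoid this, the same argument can be run with a polygon-with-circular-arcs body built directly. The Minkowski-sum description is the cleanest, though.
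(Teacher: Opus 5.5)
Your proposal is correct and follows essentially the same route as the paper. Both build $C$ and $K$ as countable Minkowski sums of segments in pairwise nonparallel directions, with the edge lengths of $K$ decaying faster than those of $C$, and take $A$ to be $\partial C$ with part of the relative interior of each edge $C_{\pm u_k}$ removed, so that the ratio of the gap length to $|K_u^*|$ blows up along $u=\pm u_k$ while every other face is a point. The differences are cosmetic: you add $B_2^2$, remove only middle thirds, and use polynomial rather than geometric rates. Your justification that exposed faces add over the countable sum is more explicit than the paper's; the one step left implicit there, that every point of $C$ has the form $b+\sum_k k^{-2}s_k$, follows from compactness of the set of such sums.
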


\begin{proof}
    Let $v_j\in \mathbb{S}^1$ be a sequence of unit vectors that are pairwise nonparallel, set $a_j:=2^{-j-1}$, and set $b_j:=2^{-2j-1}$. Define
    \begin{align*}
        C=\left\{\sum_{j=1}^{\infty}t_ja_jv_j:|t_j|\leq 1\right\},\qquad K=\left\{\sum_{j=1}^{\infty}t_jb_jv_j:|t_j|\leq 1\right\}.
    \end{align*}
    Since $\sum_{j=1}^{\infty}a_j<\infty$ and $\sum_{j=1}^{\infty}b_j<\infty$, the defining series in the sets $C$ and $K$ are absolutely convergent for any $|t_j|\leq 1$, and their tails converge uniformly with respect to the choice of coefficients $t_j\in [-1,1]$. Convexity of $C$ and $K$ follows by adding the series term-wise, and compactness follows by using a bounded subsequence argument together with the uniform tail bound. These sets have nonempty interior since, for example, the directions $v_1$ and $v_2$ are nonparallel. Therefore, $C$ and $K$ are convex bodies in $\mathbb{R}^2$.

    For each $j$, choose $u_j\in \mathbb{S}^1$ such that $\langle u_j,v_j\rangle=0$. Then maximizing the sum
    \begin{align*}
        \sum_{k=1}^{\infty}t_ka_k\langle v_k,u_j\rangle
    \end{align*}
    forces $t_k=\textup{sgn}\langle v_k,u_j\rangle$ when $k\neq j$, but $t_j\in[-1,1]$ is arbitrary. It follows that
    \begin{align*}
        C_{u_j}=x_j+a_j[-v_j,v_j]
    \end{align*}
    for some $x_j\in \mathbb{R}^2$, so that $C_{u_j}$ is a segment of length $2a_j$. Similarly, we find that $K_{u_j}$ is a segment of length $2b_j$. The same is true if we replace $u_j$ with $-u_j$.

    Define
    \begin{align*}
        A:=\partial C\backslash \bigcup_{j=1}^{\infty}(\textup{relint}(C_{u_j})\cup \textup{relint}(C_{-u_j})).
    \end{align*}
    The relative interiors of the sets $C_{u_j}$ and $C_{-u_j}$ are relatively open in $\partial C$. Then $A$ is relatively closed in $\partial C$. Since $\partial C$ is compact, $A$ is also compact.

    By definition of $A$, each set $A_{u_j}$ consists of the two endpoints of $C_{u_j}$ that are separated by a distance of $2a_j$. The set $K_{u_j}^{*}$ is an interval centered at $0$ and of length $2b_j$. We calculate
    \begin{align*}
        \rho^{(K_{u_j}^*)}(A_{u_j})=\frac{a_j}{b_j}=2^j.
    \end{align*}
    Now, choose $u\in\mathbb{S}^1$ such that $u\neq \pm u_j$ for any $j$. Similar to the above calculation, we find that $C_{u}$ is a point, and therefore $A_u$ is a point. In this case, $\rho^{(K_u^*)}(A_u)=0$. That is, $\rho^{(K_u^*)}(A_u)<\infty$ for each $u\in\mathbb{S}^1$, but considering the sequence $u_j$, we have
    \begin{align*}
        \sup_{u\in\mathbb{S}^1}\rho^{(K_u^*)}(A_u)=\infty.
    \end{align*}
\end{proof}

\section{Proof of Theorem \ref{theorem:finite_condition_universal_translate}}

\begin{lemma}\label{lemma:characterizing_sumset_lemma}
    If $K,V\subset\mathbb{R}^n$ are nonempty compact convex sets such that $0\in \textup{relint}(K)$, then
    \begin{equation}\label{eq:characterization_union_formula}
        V+K=V\cup \bigcup_{u\in \mathbb{S}^{n-1}}\textup{conv}(V_u,(V+K)_u).
    \end{equation}
\end{lemma}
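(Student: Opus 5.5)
We prove the two inclusions separately. The inclusion ``$\supseteq$'' is immediate. Since $0\in K$, we have $V\subset V+K$. For each $u$, both $V_u\subset V\subset V+K$ and $(V+K)_u\subset V+K$ hold, and $V+K$ is convex, so $\textup{conv}(V_u,(V+K)_u)\subset V+K$.

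For ``$\subseteq$'', fix $x\in (V+K)\setminus V$. We must find $u\in\mathbb{S}^{n-1}$ and points $v\in V_u$, $y\in (V+K)_u$ with $x\in[v,y]$. The natural choice of $v$ is the metric projection of $x$ onto $V$: let $v\in V$ be the nearest point to $x$, and set $u:=(x-v)/\|x-v\|_2$, which is defined because $x\notin V$. By the standard characterization of nearest points in a convex set, $\langle w-v,u\rangle\le 0$ for all $w\in V$, so $v\in V_u$.

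Next, move from $x$ along the ray $x+t u$, $t\ge 0$. Since $V+K$ is compact, $t^*:=\max\{t\geq 0: x+tu\in V+K\}$ exists. Put $y:=x+t^*u$. Then $x$ lies on the segment $[v,y]$, because $x=v+\|x-v\|_2u$ and $y=x+t^*u$. It remains to arrange $y\in(V+K)_u$, and this is where the main obstacle lies: the point $y$ is on the relative boundary of $V+K$ along the ray, but it need not maximize $\langle\cdot,u\rangle$ over $V+K$.

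To handle this, I would change the choice of direction instead of using the metric projection. Consider the function $\phi(w):=h_{V+K}(w)-\langle x,w\rangle$ and work with the separation picture. The goal is a direction $u$ and a decomposition of $x$ as a convex combination of a point of $V_u$ and a point of $(V+K)_u=V_u+K_u$. Write $x=v_0+k_0$ with $v_0\in V$, $k_0\in K$. Since $0\in\textup{relint}(K)$, we can scale: let $s^*:=\max\{s\ge 1: v_0+sk_0\in V+K\}$. Then $x\in[v_0,v_0+s^*k_0]$, and $z:=v_0+s^*k_0$ lies on the relative boundary of $V+K$ (relative to $\textup{aff}(V+K)$).

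Take a supporting direction $u$ of $V+K$ at $z$, chosen parallel to $\textup{aff}(V+K)$ when this affine hull is lower-dimensional, so that $z\in(V+K)_u$. If also $v_0\in V_u$ we are done. In general this fails, so I would take $z=v'+k'$ with $v'\in V_u$, $k'\in K_u$, which is forced by $h_{V+K}=h_V+h_K$. Then I would replace $v_0$ by $v'$ and argue via a minimality choice. Among all representations $x\in[v,z]$ with $v\in V$ and $z\in\textup{relbd}(V+K)$, I would pick one minimizing $\langle\cdot,u\rangle$-gaps, or else choose $(v_0,k_0)$ with $k_0$ minimizing the gauge $\|k_0\|_K$ over decompositions of $x$. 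With that minimal gauge $\mu=\|x-v_0\|_K\in(0,1]$, the point $x$ lies on the boundary of $V+\mu K$. A supporting $u$ of $V+\mu K$ at $x$ then gives $v_0\in V_u$ and $k_0\in\mu K_u$, since the support functions add. Setting $y:=v_0+k_0/\mu\in V_u+K_u=(V+K)_u$ yields $x=(1-\mu)v_0+\mu y$, which is the required representation.

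This gauge-minimization argument is the step I would actually execute, with compactness guaranteeing that the minimum is attained and that $\mu>0$ since $x\notin V$. The relative-interior hypothesis is needed when $K$ is lower-dimensional: it ensures that the boundary of $V+\mu K$ relative to $\textup{aff}(V+K)$ admits a supporting direction $u$ that genuinely exposes a face, which we then extend to a unit vector in $\mathbb{R}^n$.
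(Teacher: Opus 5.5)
Your final gauge-minimization argument is correct and is essentially the paper's proof. Your $\mu$ is the paper's $\tau=\min\{t\in[0,1]:x\in V+tK\}$, and the splitting $v_0\in V_u$, $k_0\in \mu K_u$ via additivity of support functions, followed by $x=(1-\mu)v_0+\mu(v_0+k_0/\mu)$, is exactly the paper's conclusion. The only difference is how $u$ is produced: the paper takes a limit of directions strictly separating $x$ from $V+t_kK$ with $t_k\uparrow\tau$, whereas you use a supporting hyperplane at $x$. For that step you should record why $x\notin\textup{relint}(V+\mu K)$: otherwise $x+\delta(x-v_0)\in V+\mu K$ for small $\delta>0$, and writing $x$ as a convex combination of this point and $v_0$ gives $x\in V+\frac{\mu}{1+\delta}K$, contradicting the minimality of $\mu$.
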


\begin{proof}
    Let $x\in (V+K)\backslash V$. Set
    \begin{align*}
        \tau:=\min\{t\in [0,1]:x\in V+tK\},
    \end{align*}
    which is well-defined by compactness of $V$ and $K$. Since $x\notin V$, we must have $\tau>0$. Choose a sequence $t_k\rightarrow \tau$ with $t_k<\tau$. Then $x\notin V+t_k K$, and there exists a hyperplane that strictly separates $x$ from $V+t_k K$. That is, there exists $u_k\in \mathbb{S}^{n-1}$ for which 
    \begin{align*}
        \langle x,u_k \rangle>h_{V}(u_k)+t_kh_K(u_k).
    \end{align*}
    Since the directions $u_k$ belong to the compact $\mathbb{S}^{n-1}$, there exists $u\in \mathbb{S}^{n-1}$ such that up to subsequences, $u_k\rightarrow u$. By continuity of the support function, we have
    \begin{align*}
        \langle x,u\rangle \geq h_V(u)+\tau h_K(u).
    \end{align*}
    Since $x\in V+\tau K$, the above inequality also holds in the other direction, and is therefore an equality. It follows that 
    \begin{align*}
        x\in (V+\tau K)_u=V_u+\tau K_u.
    \end{align*}
    Write $x=v+\tau k$, with $v\in V_u$ and $k\in K_u$. Then $v+k\in (K+V)_u$, and 
    \begin{align*}
        x=(1-\tau)v+\tau (v+k)\in \textup{conv}(V_u,(V+K)_u).
    \end{align*}
    We have shown that 
    \begin{align*}
        V+K\subset V\cup \bigcup_{u\in \mathbb{S}^{n-1}}\textup{conv}(V_u,(V+K)_u).
    \end{align*}
    The other direction follows immediately from the fact that $0\in K$, so we have established the desired relation. 
\end{proof}

\begin{lemma}\label{lemma:translate_face_containment}
    Let $G\subset\mathbb{R}^n$ be a nonempty compact convex set such that $g(G)=0$. If $u\in \mathbb{S}^{n-1}$ and $p\in G_u$, then 
    \begin{align*}
        \frac{1}{m+1}(G_u-p)\subset  G,
    \end{align*}
    where $m:=\textup{dim}(G)$.
\end{lemma}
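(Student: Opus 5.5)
The plan is to deduce the claim from the classical Minkowski--Radon inclusion for centroids, applied inside $\textup{aff}(G)$, together with one convex combination. Let $L:=\textup{aff}(G)$. Since $g(G)$ lies in the relative interior of $G$ and $g(G)=0$, we have $0\in L$. So $L$ is an $m$-dimensional linear subspace, and $G$ is a convex body in $L$ whose centroid, computed with respect to $|\cdot|_m$ on $L$, is $0$. If $m=0$, then $G=G_u=\{0\}$ and the claim is trivial, so assume $m\geq 1$.

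\textbf{Step 1.} Invoke the classical result (Minkowski--Radon; also obtainable from Gr\"unbaum-type estimates) that a convex body in $\mathbb{R}^m$ with centroid at the origin satisfies $-G\subset mG$. Equivalently, for every direction $v$ in $L$,
\begin{align*}
h_G(-v)\leq m\,h_G(v).
\end{align*}
Applying this inside $L$, we get $-x/m\in G$ for every $x\in G$. The standard proof runs as follows. For a direction $v$, let $f(t)$ be the $(m-1)$-volume of the section $G\cap\{\langle x,v\rangle=t\}$. By Brunn's principle, $f^{1/(m-1)}$ is concave on its support $[-h_G(-v),h_G(v)]$. Comparing $G$ with a cone having the same volume distribution shows that the centroid coordinate $0$ lies at most a fraction $m/(m+1)$ of the width from either end, which is exactly $h_G(-v)\leq m\,h_G(v)$. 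I would cite this rather than reprove it. This step is the only genuine input and hence the main (though standard) obstacle.

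\textbf{Step 2.} Let $q\in G_u$ be arbitrary; it suffices to show $(q-p)/(m+1)\in G$. Write
\begin{align*}
\frac{q-p}{m+1}=\frac{1}{m+1}\,q+\frac{m}{m+1}\left(-\frac{p}{m}\right).
\end{align*}
Here $q\in G$, and $-p/m\in G$ by Step 1 because $p\in G_u\subset G$. Convexity of $G$ then gives $(q-p)/(m+1)\in G$. Since $q\in G_u$ was arbitrary, $\frac{1}{m+1}(G_u-p)\subset G$.

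The argument never uses that $p$ and $q$ lie on a common face. Only $p,q\in G$ is needed, so the statement actually holds with $G_u$ replaced by $G$ itself.
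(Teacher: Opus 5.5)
Your proposal is correct and is essentially the paper's argument: the paper also applies the centroid inclusion $-G\subset mG$ inside $\textup{aff}(G)$ (citing B\"or\"oczky--Figalli--Ramos), and then concludes via $G-p\subset G+(-G)\subset G+mG=(1+m)G$, which is exactly your convex combination. As you note, this gives the stronger inclusion $\frac{1}{m+1}(G-p)\subset G$ for every $p\in G$, and the paper's proof in fact establishes this stronger form before specializing to $G_u\subset G$.
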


\begin{proof}
    By applying \cite[Lemma 1.11.4]{BKFR-2026} in the $m$-dimensional subspace $\textup{aff}(G)$, we have that $-G\subset m G$. If $p\in G$, then
    \begin{align*}
        G-p\subset G+(-G)\subset (1+m)G.
    \end{align*}
    This proves what we need, since if $p\in G_u$, then $p\in G$ and $G_u\subset G$.
\end{proof}

\begin{lemma}\label{lemma:finite_convex_faces_condition}
    Let $K\subset\mathbb{R}^n$ be a compact convex set with $\textup{dim}(K)\geq 1$, and let $A\subset\mathbb{R}^n$ be a nonempty compact set. Set
    \begin{align*}
        \Lambda^{(K)}(A):=\sup_{u\in \mathbb{S}^{n-1}}c^{(K_u)}(A_u).
    \end{align*}
    Then
    \begin{align*}
        c^{(K)}(A)<\infty \iff \Lambda^{(K)}(A)<\infty.
    \end{align*}
    Moreover, if $\Lambda^{(K)}(A)<\infty$, then $\rho^{(K^*)}(A)<\infty$ and
    \begin{align*}
        c^{(K)}(A)\leq \max\{\rho^{(K^*)}(A),(1+m)\Lambda^{(K)}(A)\},
    \end{align*}
    where $m:=\textup{dim}(K)$.
\end{lemma}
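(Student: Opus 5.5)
The plan is to prove the two implications separately. The forward direction is a direct computation on faces. The reverse direction is an application of Lemma \ref{lemma:characterizing_sumset_lemma} to a suitably translated convex set $V$.

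\emph{Forward direction.} Suppose $c^{(K)}(A)<\infty$ and fix $\lambda>c^{(K)}(A)$, so that $A+\lambda K$ is convex. For each $u\in\mathbb{S}^{n-1}$ we use the face identity
\begin{align*}
(A+\lambda K)_u=A_u+\lambda K_u.
\end{align*}
The exposed face of a convex set is convex, so $A_u+\lambda K_u$ is convex. Hence $c^{(K_u)}(A_u)\leq\lambda$ for every $u$, which gives $\Lambda^{(K)}(A)\leq \lambda<\infty$.

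\emph{Reverse direction: setup.} Assume $\Lambda:=\Lambda^{(K)}(A)<\infty$ and put $m:=\dim(K)$.

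First we show $\rho^{(K^*)}(A)<\infty$, using Proposition \ref{theorem:hausdorff_distance_finite_condition}: we need every $x\in\textup{conv}(A)$ to satisfy $(x+\textup{aff}(K^*))\cap A\neq\varnothing$. Let $L$ be the linear span of $K^*$. Take $x\in\textup{conv}(A)$ and suppose, for contradiction, that $x+L$ misses $A$. Then $x+L$ meets the compact convex set $\textup{conv}(A)$ in a compact convex slice $S$. Choose an exposed point of $S$, exposed in some direction $w\in L$; perturbing slightly by a direction orthogonal to $L$, this should yield a face $\textup{conv}(A)_u$ with $K_u$ parallel to a subspace transverse to the relevant direction. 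Along that face, finiteness of $c^{(K_u)}(A_u)$ forces the projection of $A_u$ orthogonal to $\textup{aff}(K_u)$ to be convex. I expect this to be the delicate step. An alternative I would try first is induction on $n$ via Corollary \ref{corollary:projection_covering_condition}: show that the projection $P(A)$ onto $L^\perp$ is convex. If it is not, some boundary face of $\textup{conv}(P(A))$ exhibits a gap. That face lifts to a face $A_u$ with $u\in L^\perp$, so $K_u=K$ up to translation. Finiteness of $c^{(K)}(A_u)$ then forces $P(A_u)$ to be convex, which gives the contradiction.

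\emph{Reverse direction: convexification.} Let $\lambda\geq\max\{\rho^{(K^*)}(A),(1+m)\Lambda\}$ (slightly larger to handle infima), and set $V:=\textup{conv}(A)+\lambda g(K)$. By the choice of $\lambda$ we have $\textup{conv}(A)\subset A+\lambda K^*$, so $V\subset A+\lambda K$. Conversely $A+\lambda K\subset V+\lambda K^*$. It therefore suffices to show $V+\lambda K^*\subset A+\lambda K$, since then $A+\lambda K=V+\lambda K^*$ is convex.

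Apply Lemma \ref{lemma:characterizing_sumset_lemma} to $V$ and $\lambda K^*$, noting $0\in\textup{relint}(K^*)$. Points of $V$ are already covered. A remaining point lies in $\textup{conv}(V_u,(V+\lambda K^*)_u)$, which is contained in $V_u+\lambda\textup{conv}(\{0\}\cup K^*_u)$. Here $V_u=\textup{conv}(A_u)+\lambda g(K)$ and $K^*_u=K_u-g(K)$. Since $\lambda/(1+m)>c^{(K_u)}(A_u)$, the set $A_u+\tfrac{\lambda}{1+m}K_u$ is convex, hence equals $\textup{conv}(A_u)+\tfrac{\lambda}{1+m}K_u$. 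Lemma \ref{lemma:translate_face_containment} gives $\tfrac{1}{1+m}(K^*_u-p)\subset K^*$ for $p\in K^*_u$. Combining these, each point $\textup{conv}(a)+\lambda g(K)+t\lambda k^*$ with $t\in[0,1]$ and $k^*\in K^*_u$ can be rewritten as $a'+\lambda k$ with $a'\in A_u$ and $k\in K$. Verifying this bookkeeping with $t\in[0,1]$ is the main technical computation; the factor $(1+m)$ absorbs the shift from $0$ to the face.
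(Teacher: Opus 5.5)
You have a genuine gap at the step $\rho^{(K^*)}(A)<\infty$, on which the inclusion $V\subset A+\lambda K$ depends. The rest of your proposal is fine. The forward direction, and the convexification step via Lemma \ref{lemma:characterizing_sumset_lemma} applied to $V=\textup{conv}(A)+\lambda g(K)$, are the paper's argument up to translating $K$. The bookkeeping you defer does go through. With $\gamma=1/(1+m)$ and $p\in(K^*)_u$, one has $\textup{conv}(\gamma((K^*)_u-p),(K^*)_u)\subset K^*$. Using $A_u+r(K^*)_u=\textup{conv}(A_u)+r(K^*)_u$ for all $r\geq\gamma\lambda$, this gives $\textup{conv}(A_u)+s\lambda(K^*)_u\subset A+\lambda K^*$ for all $s\in[0,1]$.

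The projection route you say you would try first cannot work. Nonconvexity of $P(A)$ need not be visible on a boundary face of $\textup{conv}(P(A))$, and the faces $A_u$ with $u\in L^{\bot}$ cannot detect it. Take $K=\textup{conv}\{0,e_1\}\subset\mathbb{R}^3$ and $A=K+Z$ with $Z=\{z\in\textup{span}\{e_2,e_3\}:1\leq\|z\|_2\leq 2\}$. Every unit $u\in L^{\bot}$ gives $A_u=K+2u$, a segment, so $c^{(K_u)}(A_u)=0$. Yet $P(A)=Z$ is not convex, while the whole relative boundary of $\textup{conv}(P(A))$ lies in $P(A)$. The hypothesis excludes this $A$ only through the direction $u=e_1\notin L^{\bot}$, where $K_u$ is a point and $A_u=e_1+Z$. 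Your argument never uses such directions.

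Your first sketch, by contrast, can be finished, and finishing it is essentially the paper's proof. The perturbation of $w$ is not needed. Set $C:=\textup{conv}(A)$. A relative boundary point $y$ of the slice $S=(x+L)\cap C$ lies in $\partial C$, because a point of $\textup{int}(C)$ has a whole neighborhood in $x+L$ inside $S$, and $S\neq x+L$ since $C$ is bounded and $\dim L\geq 1$. So $y\in C_u$ for some $u$, and any such $u$ works, possibly with $\dim K_u<\dim K$. For $\mu>\Lambda$ the set $A_u+\mu K_u$ is convex, hence equals $C_u+\mu K_u$. Fixing $p\in K_u$ gives $y+\mu p=a+\mu q$ with $a\in A_u$ and $q\in K_u$. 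Then $y-a=\mu(q-p)\in L$ and $x-a=(x-y)+(y-a)\in L$, contradicting $(x+L)\cap A=\varnothing$.

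The paper runs this directly rather than by contradiction. It first treats $x\in\partial C$, then reaches interior $x$ by following a line $x+\mathbb{R}v$, $0\neq v\in L$, to $\partial C$. Proposition \ref{theorem:hausdorff_distance_finite_condition} then gives $\rho^{(K^*)}(A)<\infty$.
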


\begin{proof}
    Suppose that $c^{(K)}(A)<\infty$. Choose $r>0$ such that $A+rK$ is convex. For every $u\in \mathbb{S}^{n-1}$, we have $(A+rK)_u=A_u+rK_u$. Since the exposed faces of a convex set are convex, the set $A_u+rK_u$ is convex. It follows that
    \begin{align*}
        c^{(K_u)}(A_u)\leq r
    \end{align*}
    for each $u\in \mathbb{S}^{n-1}$, and therefore $\Lambda ^{(K)}(A)\leq r<\infty$.

    To prove the other direction, assume that $\Lambda:=\Lambda^{(K)}(A)<\infty$. Both $c^{(K)}(A)$ and $\Lambda^{(K)}(A)$ are invariant under translation of $K$, so we will assume that the relative centroid of $K$ is the origin. That is, $K=K^*$. Set $C:=\textup{conv}(A)$ and $L:=\textup{aff}(K)$. Because $0\in K$, $L$ is a linear subspace.

    We will show that $R:=\rho^{(K)}(A)<\infty$. Choose $\mu>\Lambda$. We claim that for every $x\in \partial C$, there exists $a\in A$ such that $x-a\in L$. To prove this, choose $u\in \mathbb{S}^{n-1}$ such that $x\in C_u$. Since
    \begin{align*}
        c^{(K_u)}(A_u)\leq \Lambda<\mu,
    \end{align*}
    the set $A_u+\mu K_u$ is convex. We have
    \begin{align*}
        A_u+\mu K_u&=\textup{conv}(A_u+\mu K_u)\\
        &=\textup{conv}(A_u)+\mu K_u\\
        &=C_u+\mu K_u.
    \end{align*}
    Choose $p\in K_u$. Since $x\in C_u$, the above relation implies that $x+\mu p=a+\mu q$, for some $a\in A_u$ and $q\in K_u$. Then $x-a=\mu(q-p)\in L$, verifying the claim. Next, we will show that the same claim is true if we choose arbitrary $x\in C$. If $\textup{int}(C)=\varnothing$, then $C=\partial C$, and there is nothing to prove. Suppose that $\textup{int}(C)$ is nonempty, and choose $x\in \textup{int}(C)$. Since $\textup{dim}(L)\geq 1$, there exists a nonzero $v\in L$. Since $C$ is a convex body, the line $x+\mathbb{R}v$ intersects $\partial C$ at some point $y$. By the previous claim, there exists $a\in A$ such that $y-a\in L$. Since $x-y\in L$, we have 
    \begin{align*}
        x-a=(x-y)+(y-a)\in L.
    \end{align*}
    That is, for every $x\in C$, the set $A\cap (x+L)$ is nonempty. It follows from Proposition \ref{theorem:hausdorff_distance_finite_condition} that $R<\infty$.

    Next, we will verify that $c^{(K)}(A)<\infty$. For $\gamma:=(m+1)^{-1}$, choose $\mu>\max\{\gamma R,\Lambda\}$, and set $D:=\gamma^{-1}\mu$. Then $D>R$. Fix $u\in \mathbb{S}^{n-1}$ and choose $p=g(K_u)\in K_u$. By Lemma \ref{lemma:translate_face_containment}, both $\gamma(K_u-p)$ and $K_u$ are contained in $K$. Since $K$ is convex,
    \begin{align*}
        \textup{conv}(\gamma(K_u-p),K_u)\subset K.
    \end{align*}
    Consequently,
    \begin{align*}
        A+DK&\supset A_u+D\hspace{0.5mm}\textup{conv}(\gamma(K_u-p),K_u)\\
        &=\bigcup_{t\in [0,1]}\left[A_u+D(t\gamma+(1-t))K_u-\mu tp\right].
    \end{align*}
    For $t\in [0,1]$, set
    \begin{align*}
        r_t:=D(t\gamma +(1-t))=\mu t+D(1-t).
    \end{align*}
    Since $\gamma\leq 1$, we have $D\geq \mu$, and therefore $r_t\geq \mu>\Lambda$. Then $A_u+r_t K_u$ is convex, and just as above
    \begin{align*}
        A_u+r_t K_u=C_u+r_t K_u.
    \end{align*}
    By substitution, we have
    \begin{align*}
        A+DK&\supset \bigcup_{t\in [0,1]}\left[C_u+\mu t(K_u-p)+(1-t)DK_u\right]\\
        &\supset \bigcup_{t\in [0,1]}\left[C_u+(1-t)DK_u\right],
    \end{align*}
    where we used that $0\in K_u-p$. Since $C_u$ is convex,
    \begin{align*}
        C_u+(1-t)DK_u=tC_u+(1-t)(C_u+DK_u).
    \end{align*}
    Also,
    \begin{align*}
        (C+DK)_u=C_u+DK_u.
    \end{align*}
    Therefore,
    \begin{align*}
        A+DK\supset \textup{conv}(C_u,(C+DK)_u).
    \end{align*}
    Since $u$ was arbitrary,
    \begin{align*}
        A+DK\supset \bigcup_{u\in \mathbb{S}^{n-1}}\textup{conv}(C_u,(C+DK)_u).
    \end{align*}
    Because $D>R$, we have $C\subset A+DK$. Now, use Lemma \ref{lemma:characterizing_sumset_lemma} to get
    \begin{align*}
        A+DK\supset C\cup \bigcup_{u\in \mathbb{S}^{n-1}}\textup{conv}(C_u,(C+DK)_u)=C+DK.
    \end{align*}
    The inclusion $A+DK\subset C+DK$ is immediate because $A\subset C$. Therefore, $A+DK=C+DK$, and the right side is convex. But this is true for arbitrary $D>\max\{R,\gamma^{-1}\Lambda\}$, so we obtain the bound
    \begin{align*}
        c^{(K)}(A)\leq \max\{R,\gamma^{-1}\Lambda\}<\infty,
    \end{align*}
    as claimed.
\end{proof}

\begin{lemma}\label{lemma:cover_convex_index_equivalence}
    Let $G\subset\mathbb{R}^n$ be nonempty, compact, and convex and let $B\subset\mathbb{R}^n$ be nonempty and compact. Define 
    \begin{align*}
        \Delta^{(G)}(B):=\sup_{p\in \textup{relint}(G)}\rho^{(G-p)}(B).
    \end{align*}
    Then as extended real numbers, $c^{(G)}(B)=\Delta^{(G)}(B)$.
\end{lemma}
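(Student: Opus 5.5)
The plan is to prove the two inequalities $c^{(G)}(B)\leq \Delta^{(G)}(B)$ and $\Delta^{(G)}(B)\leq c^{(G)}(B)$ directly from the definitions, with the convention $\inf\varnothing=\infty$. Both rest on one observation: $B+\lambda G$ is convex if and only if $B+\lambda G=\textup{conv}(B)+\lambda G$. Indeed, $\textup{conv}(B+\lambda G)=\textup{conv}(B)+\lambda G$ because $G$ is convex. The relation between the two quantities is then a pointwise translation: for $c\in\textup{conv}(B)$ and $p\in G$,
\begin{align*}
    c+\lambda p\in B+\lambda G \iff c\in B+\lambda(G-p).
\end{align*}

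\emph{Step 1: $\Delta^{(G)}(B)\leq c^{(G)}(B)$.} We may assume $c^{(G)}(B)<\infty$. Fix $\lambda>c^{(G)}(B)$; by the threshold remark in the introduction, $B+\lambda G$ is convex, so it equals $\textup{conv}(B)+\lambda G$. Fix $p\in\textup{relint}(G)$ and $c\in\textup{conv}(B)$. Then $c+\lambda p\in B+\lambda G$, so $c\in B+\lambda(G-p)$ by the translation observation. Thus $\textup{conv}(B)\subset B+\lambda(G-p)$, which gives $\rho^{(G-p)}(B)\leq\lambda$. Note that $0\in\textup{relint}(G-p)$, so $\rho^{(G-p)}$ is defined. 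Taking the supremum over $p$ and letting $\lambda\downarrow c^{(G)}(B)$ gives the inequality.

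\emph{Step 2: $c^{(G)}(B)\leq\Delta^{(G)}(B)$.} Assume $\Delta:=\Delta^{(G)}(B)<\infty$ and fix $\lambda>\Delta$. We show $\textup{conv}(B)+\lambda G\subset B+\lambda G$. Take $x=c+\lambda g$ with $c\in\textup{conv}(B)$ and $g\in G$, and first suppose $g\in\textup{relint}(G)$. Since $\rho^{(G-g)}(B)\leq\Delta<\lambda$, there is $r<\lambda$ with $\textup{conv}(B)\subset B+r(G-g)$. Because $0\in G-g$ and $G-g$ is convex, $r(G-g)\subset\lambda(G-g)$. Hence $c\in B+\lambda(G-g)$, that is, $x\in B+\lambda G$.

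Next suppose $g\in\textup{relbd}(G)$. Choose $g_k\in\textup{relint}(G)$ with $g_k\to g$; then $c+\lambda g_k\in B+\lambda G$ for every $k$. The set $B+\lambda G$ is compact, being a sum of compact sets, so the limit $x$ also lies in it. Therefore $B+\lambda G=\textup{conv}(B)+\lambda G$ is convex. This holds for every $\lambda>\Delta$, so $c^{(G)}(B)\leq\Delta$.

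\emph{Main obstacle.} There is no real obstacle; the main point is the boundary case in Step 2, which is handled by the closedness argument above. A smaller point is the degenerate case $\dim G=0$, say $G=\{p\}$. Then $\Delta^{(G)}(B)=\rho^{(\{0\})}(B)$, which is $0$ if $B$ is convex and $\infty$ otherwise. The same is true of $c^{(G)}(B)$, so the identity holds as extended reals and the argument above covers this case without change.
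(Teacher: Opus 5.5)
Your proposal is correct and follows essentially the same route as the paper: Step 1 matches the paper's first direction, and your pointwise treatment in Step 2 (interior $g$ via the covering at $g$, boundary $g$ via compactness of $B+\lambda G$) is the paper's union over $p\in\textup{relint}(G)$ followed by taking closures. Your explicit remark that $r(G-g)\subset\lambda(G-g)$ for $r<\lambda$ justifies the passage from $\rho^{(G-p)}(B)<t$ to $\textup{conv}(B)\subset B+t(G-p)$, which the paper uses without comment.
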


\begin{proof}
    Suppose that $c^{(G)}(B)<\infty$ and let $t>c^{(G)}(B)$. Then $B+tG$ is convex. Fix $p\in \textup{relint}(G)$. Since translation does not affect convexity, $B+t(G-p)$ is convex. Therefore,
    \begin{align*}
        B+t(G-p)=\textup{conv}(B)+t(G-p).
    \end{align*}
    Since $0\in G-p$, we have 
    \begin{align*}
        \textup{conv}(B)\subset \textup{conv}(B)+t(G-p)=B+t(G-p).
    \end{align*}
    Then $\rho^{(G-p)}(B)\leq t$. Since this holds for every $p\in \textup{relint}(G)$, we have $\Delta^{(G)}(B)\leq t$. Taking $t\rightarrow c^{(G)}(B)$, we get $\Delta^{(G)}(B)\leq c^{(G)}(B)$.

    Next, suppose that $\Delta^{(G)}(B)<\infty$. Choose $t>\Delta^{(G)}(B)$. Then for every $p\in \textup{relint}(G)$, we have $\rho^{(G-p)}(B)<t$. Therefore, 
    \begin{align*}
        \textup{conv}(B)\subset B+t(G-p).
    \end{align*}
    Rearranging the above containment, we get
    \begin{align*}
        \textup{conv}(B)+tp\subset B+tG.
    \end{align*}
    But this holds for every $p\in \textup{relint}(G)$, so taking the union of the above containment over all $p\in \textup{relint}(G)$, we get
    \begin{align*}
        \textup{conv}(B)+t\hspace{0.5mm}\textup{relint}(G)\subset B+tG.
    \end{align*}
    The set $B+tG$ is closed, and $\textup{cl}(\textup{relint}(G))=G$, so taking the closure, we get
    \begin{align*}
        \textup{conv}(B)+tG\subset B+tG.
    \end{align*}
    The inclusion holds in the other direction, since $B\subset \textup{conv}(B)$. We have $\textup{conv}(B)+tG=B+tG$, which implies $c^{(G)}(B)\leq t$. Taking $t\rightarrow\Delta^{(G)}(B)$, we have $c^{(G)}(B)\leq \Delta^{(G)}(B)$.
\end{proof}

\begin{proof}[Proof of Theorem \ref{theorem:finite_condition_universal_translate}]
    We claim that
    \begin{align*}
        \Lambda^{(K)}(A)<\infty \iff \sup_{u\in \mathbb{S}^{n-1}}\Delta^{(K_u)}(A_u)<\infty.
    \end{align*}
    The theorem will then follow from Lemma \ref{lemma:finite_convex_faces_condition}. By Lemma \ref{lemma:cover_convex_index_equivalence},
    \begin{align*}
        \Lambda^{(K)}(A)&=\sup_{u\in \mathbb{S}^{n-1}}c^{(K_u)}(A_u)\\
        &=\sup_{u\in \mathbb{S}^{n-1}}\Delta^{(K_u)}(A_u).
    \end{align*}
    The claim follows immediately.
\end{proof}

\section{Proof of Theorem \ref{theorem:main_theorem}}

\begin{lemma}
    Let $B\subset\mathbb{R}^n$ be a nonempty compact set, let $G\subset\mathbb{R}^n$ be a nonempty compact convex set, and let $\tilde{G}$ be any translate of $G$ that contains $0$ in its relative interior. Then
    \begin{align}\label{eq:covering_bound_schneider}
        \rho^{(\tilde{G})}(B)\leq c^{(G)}(B),
    \end{align}
    and we also have
    \begin{align}\label{eq:interative_bound_schneider}
        \sup_{v\in \mathbb{S}^{n-1}}c^{(G_v)}(B_v)\leq c^{(G)}(B).
    \end{align}
\end{lemma}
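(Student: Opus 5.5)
Both inequalities are trivial when $c^{(G)}(B)=\infty$, so throughout we assume $c^{(G)}(B)<\infty$ and fix an arbitrary $t>c^{(G)}(B)$. By the threshold remark in the introduction, $B+tG$ is convex. It then suffices to show that $\rho^{(\tilde G)}(B)\le t$ and that $c^{(G_v)}(B_v)\le t$ for every $v\in\mathbb{S}^{n-1}$. Letting $t\downarrow c^{(G)}(B)$ gives both \eqref{eq:covering_bound_schneider} and \eqref{eq:interative_bound_schneider}.

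For \eqref{eq:covering_bound_schneider}, write $\tilde G=G-p$ with $p\in\textup{relint}(G)$; such a $p$ exists because $0\in\textup{relint}(\tilde G)$. Then $\rho^{(\tilde G)}(B)=\rho^{(G-p)}(B)\le\Delta^{(G)}(B)$ by the definition of $\Delta^{(G)}$. By Lemma \ref{lemma:cover_convex_index_equivalence}, $\Delta^{(G)}(B)=c^{(G)}(B)$. This proves the first inequality without even fixing $t$. Alternatively, one can repeat the first half of the proof of Lemma \ref{lemma:cover_convex_index_equivalence} directly. Since $B+t(G-p)$ is convex and contains $B$, it equals $\textup{conv}(B)+t(G-p)$, which contains $\textup{conv}(B)$ because $0\in G-p$.

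For \eqref{eq:interative_bound_schneider}, fix $v$ and argue exactly as in the first direction of Lemma \ref{lemma:finite_convex_faces_condition}. Exposed sets are additive under Minkowski sums of compact sets, so $(B+tG)_v=B_v+tG_v$. An exposed set of a compact convex set is convex, hence $B_v+tG_v$ is convex. Therefore $c^{(G_v)}(B_v)\le t$, and since $v$ was arbitrary we can take the supremum over $v$ and then let $t\downarrow c^{(G)}(B)$.

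The only point needing care is the identity $(B+tG)_v=B_v+tG_v$ for a nonconvex compact $B$. This follows because $h_{B+tG}(v)=h_B(v)+th_G(v)$, and a point $b+tg$ attains this maximum if and only if both $b$ and $g$ attain their respective maxima. Everything else is a direct citation of earlier results, so I expect no real obstacle.
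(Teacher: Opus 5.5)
Your proposal is correct and follows the paper's proof: the paper proves the first inequality by the direct argument you list as your alternative (convexity of $B+t\tilde G$ together with $B\subset B+t\tilde G$). That argument is exactly the first half of Lemma \ref{lemma:cover_convex_index_equivalence}, so your citation of that earlier lemma is also legitimate, and the second inequality is argued identically via $(B+tG)_v=B_v+tG_v$.
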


\begin{proof}
    If $c^{(G)}(B)=\infty$, then both statements are immediate. Assume that $c^{(G)}(B)<\infty$. To prove the first bound, let $t>c^{(G)}(B)$, so that $B+tG$ is convex. Translating $G$ does not affect the convexity of the sum, so $B+t\tilde{G}$ is convex. Since $0\in \tilde{G}$, we have $B\subset B+t\tilde{G}$, which by convexity of the set on the right implies that $\textup{conv}(B)\subset B+t\tilde{G}$. Then $\rho^{(\tilde{G})}(B)\leq t$. Letting $t\rightarrow c^{(G)}(B)$ verifies \eqref{eq:covering_bound_schneider}.

    To prove the second bound, with $t$ chosen as above, we have that $B+tG$ is convex. Then every exposed support set is convex. That is, for each $v\in \mathbb{S}^{n-1}$, the set
    \begin{align*}
        (B+tG)_v=B_v+tG_v
    \end{align*}
    is convex. It follows that for each $v$, we have 
    \begin{align*}
        c^{(G_v)}(B_v)\leq c^{(G)}(B),
    \end{align*}
    which proves \eqref{eq:interative_bound_schneider} after taking the supremum.
\end{proof}

\begin{lemma}\label{lemma:iterative_bounded_faces}
    Suppose that $\Lambda^{(K)}(A)\leq M$. For $1\leq j\leq n-1$ and $\textbf{u}\in O_j$, we have
        \begin{align}\label{eq:uniform_iterative_bound}
            c^{(K_{\textbf{u}})}(A_{\textbf{u}})\leq M.
        \end{align}
    As a consequence, if $\tilde{K}_{\textbf{u}}$ is a translate of $K_{\textbf{u}}$ containing $0$ in its relative interior, then 
        \begin{align}
            \rho^{(\tilde{K}_{\textbf{u}})}(A_{\textbf{u}})\leq M.
        \end{align}
\end{lemma}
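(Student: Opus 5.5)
We argue by induction on $j$, using the unnamed lemma immediately preceding this one, specifically inequality \eqref{eq:interative_bound_schneider}, as the inductive step. Recall that $\Lambda^{(K)}(A)=\sup_{u\in\mathbb{S}^{n-1}}c^{(K_u)}(A_u)$.

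For the base case $j=1$, we have $O_1=\mathbb{S}^{n-1}$. So $c^{(K_{u})}(A_{u})\leq \Lambda^{(K)}(A)\leq M$ for every $u$, directly from the definition.

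For the inductive step, suppose \eqref{eq:uniform_iterative_bound} holds for all tuples in $O_{j-1}$. Let $\textbf{u}=(u_1,\dots,u_j)\in O_j$, and write $\textbf{u}'=(u_1,\dots,u_{j-1})\in O_{j-1}$. Truncating a pairwise orthogonal tuple keeps it pairwise orthogonal, so $\textbf{u}'$ really is in $O_{j-1}$. By definition,
\begin{align*}
    A_{\textbf{u}}=(A_{\textbf{u}'})_{u_j},\qquad K_{\textbf{u}}=(K_{\textbf{u}'})_{u_j}.
\end{align*}
We now apply \eqref{eq:interative_bound_schneider} with $B:=A_{\textbf{u}'}$ and $G:=K_{\textbf{u}'}$, taking $v=u_j$. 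This is allowed because $A_{\textbf{u}'}$ is nonempty and compact and $K_{\textbf{u}'}$ is nonempty, compact and convex: an exposed set of a nonempty compact set is again nonempty and compact, and for a convex set it is convex. The lemma gives
\begin{align*}
    c^{(K_{\textbf{u}})}(A_{\textbf{u}})\leq c^{(K_{\textbf{u}'})}(A_{\textbf{u}'})\leq M,
\end{align*}
where the last inequality is the induction hypothesis. That lemma places no dimension restriction on $G$, so it does not matter if $K_{\textbf{u}'}$ is a single point. In that case the quantities may be infinite, but the inequality still holds in the extended reals. Note also that orthogonality of the directions is never used; it only fixes the index set.

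For the consequence, apply \eqref{eq:covering_bound_schneider} with $B=A_{\textbf{u}}$, $G=K_{\textbf{u}}$ and $\tilde G=\tilde K_{\textbf{u}}$. This gives $\rho^{(\tilde K_{\textbf{u}})}(A_{\textbf{u}})\leq c^{(K_{\textbf{u}})}(A_{\textbf{u}})\leq M$.

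There is no real obstacle here. The only step needing care is confirming that the iterated exposed sets satisfy the hypotheses of the previous lemma at each stage, namely nonemptiness and compactness, plus convexity for the sets coming from $K$. Each of these follows because taking an exposed set preserves them.
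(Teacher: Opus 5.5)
Your proof is correct and follows the paper's argument essentially verbatim: induction on $j$, with the base case read off from the definition of $\Lambda^{(K)}(A)$, the inductive step supplied by \eqref{eq:interative_bound_schneider} applied to $B=A_{\textbf{u}'}$ and $G=K_{\textbf{u}'}$, and the covering bound obtained from \eqref{eq:covering_bound_schneider}. Your additional checks are harmless details the paper leaves implicit, namely that iterated exposed sets stay nonempty, compact and (for $K$) convex, and that the case where $K_{\textbf{u}'}$ is a single point causes no trouble.
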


\begin{proof}
    We will prove the lemma by induction. For the case $j=1$, we have $\textbf{u}=(u_1)$, and the statement $c^{(K_{u_1})}(A_{u_1})\leq M$ follows immediately from the fact that $\Lambda ^{(K)}(A)\leq M$.
    
    For the induction step, assume that $c^{(K_{\textbf{u}})}(A_{\textbf{u}})\leq M$ for some $\textbf{u}\in O_j$, with $1\leq j<n-1$. Apply bound \eqref{eq:interative_bound_schneider} with $B:=A_{\textbf{u}}$ and $G:=K_{\textbf{u}}$ to get
    \begin{align*}
        \sup_{v}c^{(K_{(\textbf{u},v)})}(A_{(\textbf{u},v)})\leq c^{(K_{\textbf{u}})}(A_{\textbf{u}})\leq M.
    \end{align*}
    This implies that for every admissible $(\textbf{u},v)\in O_{j+1}$, we have
    \begin{align*}
        c^{(K_{(\textbf{u},v)})}(A_{(\textbf{u},v)})\leq M,
    \end{align*}
    showing that if \eqref{eq:uniform_iterative_bound} holds for $j$, then it also holds for $j+1$. By induction, we have $c^{(K_{\textbf{u}})}(A_{\textbf{u}})\leq M$ for any $\textbf{u}\in O_j$ with $1\leq j\leq n-1$.

    For the last part of the lemma, it follows from \eqref{eq:covering_bound_schneider} with $G:=K_{\textbf{u}}$ and $B:=A_{\textbf{u}}$ that
    \begin{align*}
        \rho^{(\tilde{K}_{\textbf{u}})}(A_{\textbf{u}})\leq c^{(K_{\textbf{u}})}(A_{\textbf{u}})\leq M
    \end{align*}
    for every translate $\tilde{K}_{\textbf{u}}$ containing $0$ in its relative interior, which is the desired bound.
\end{proof}

\begin{proof}[Proof of Theorem \ref{theorem:main_theorem}]
    Let
    \begin{align*}
        R_j:=\sup_{\textbf{u}\in O_j}\rho^{(K_{\textbf{u}}^*)}(A_{\textbf{u}}),\qquad C_j:=\sup_{\textbf{u}\in O_j}c^{(K_{\textbf{u}})}(A_{\textbf{u}}).
    \end{align*}
    Assume that the covering condition on the right side of \eqref{eq:covering_radius_finite_condition} holds, so that $R_j<\infty$ for every $1\leq j\leq n-1$. We will show that $C_1<\infty$ and use Lemma \ref{lemma:finite_convex_faces_condition} to conclude that $c^{(K)}(A)<\infty$.
    
    We will use backward induction, starting at $j=n-1$. For $\textbf{u}\in O_{n-1}$, the sets $K_{\textbf{u}}$ and $A_{\textbf{u}}$ have affine dimension at most $1$. If $K_{\textbf{u}}$ is a point, then $K_{\textbf{u}}^*=\{0\}$ and the condition $\rho^{(\{0\})}(A_{\textbf{u}})<\infty$ is equivalent to $A_{\textbf{u}}=\textup{conv}(A_{\textbf{u}})$. It follows that $A_{\textbf{u}}$ is convex and $c^{(K_{\textbf{u}})}(A_{\textbf{u}})=\rho^{(K_{\textbf{u}}^*)}(A_{\textbf{u}})=0$. If $K_{\textbf{u}}$ is a nondegenerate segment, then $c^{(K_{\textbf{u}})}(A_{\textbf{u}})=\rho^{(K_{\textbf{u}}^*)}(A_{\textbf{u}})$. Consequently, $C_{n-1}=R_{n-1}<\infty$.

    Next, assume that $C_{j+1}<\infty$ for some $1\leq j\leq n-2$. Fix $\textbf{u}\in O_{j}$. If $K_{\textbf{u}}$ is a point, then just as above, finiteness of $R_j$ forces $A_{\textbf{u}}$ to be convex, and we have $c^{(K_{\textbf{u}})}(A_{\textbf{u}})=0$. Suppose that $\textup{dim}(K_{\textbf{u}})\geq 1$. Both $K_{\textbf{u}}$ and $A_{\textbf{u}}$ lie in translates of the subspace $H_{\textbf{u}}:=\textup{span}\{u_1,\dots,u_j\}^{\bot}$. By invariance under translation, we may translate $K_{\textbf{u}}$ and $A_{\textbf{u}}$ so that they lie in $H_{\textbf{u}}$. If $v\in \mathbb{S}^{n-1}\cap H_{\textbf{u}}$, then $(\textbf{u},v)\in O_{j+1}$, and therefore the quantity $\Lambda^{(K_{\textbf{u}})}(A_{\textbf{u}})$ computed in $H_{\textbf{u}}$ is at most $C_{j+1}$. Now, apply Lemma \ref{lemma:finite_convex_faces_condition} within $H_{\textbf{u}}$ to get
    \begin{align*}
        c^{(K_{\textbf{u}})}(A_{\textbf{u}})\leq \max\left\{\rho^{(K_{\textbf{u}}^*)}(A_{\textbf{u}}),(1+\textup{dim}(K_{\textbf{u}}))C_{j+1}\right\}\leq \max\{R_{j},(1+n-j)C_{j+1}\}.
    \end{align*}
    Taking the supremum over all $\textbf{u}\in O_j$ gives $C_j<\infty$. By induction, we conclude that $C_1<\infty$, and consequently $c^{(K)}(A)<\infty$.

    To prove the other direction, suppose that $c^{(K)}(A)<\infty$. Then $\Lambda^{(K)}(A)\leq c^{(K)}(A)$. Apply Lemma \ref{lemma:iterative_bounded_faces} with $M:=c^{(K)}(A)$ to conclude that $R_j\leq M$ for each $1\leq j\leq n-1$.
\end{proof}

\section{Proof of Theorem \ref{theorem:polygone_equivalence}}\label{subsection:polyhedral_convexifiers}

\begin{lemma}\label{lemma:one_level_polytope}
    Let $P\subset\mathbb{R}^n$ be a nonempty convex polytope, and let $\textbf{u}\in O_j$. There exists $\varepsilon_0>0$ such that if $0<\varepsilon<\varepsilon_0$, then 
    \begin{align*}
        P_{w_{\varepsilon}}=P_{\textbf{u}},
    \end{align*}
    where
    \begin{align*}
        w_{\varepsilon}=u_1+\varepsilon u_2+\dots+\varepsilon^{j-1}u_j.
    \end{align*}
\end{lemma}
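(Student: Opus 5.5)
We argue by reducing to the vertices of $P$. Let $V$ be the finite vertex set of $P$. For any $w\neq 0$, the exposed set $P_w$ is the face $\textup{conv}(V\cap P_w)$. It is also the convex hull of the vertices maximizing $\langle\cdot,w\rangle$ over $V$. The same holds for the iterated face: $P_{\textbf{u}}$ is a face of a face, hence a face of $P$. By induction on $j$ it equals $\textup{conv}(V_{\textbf{u}})$, where $V_{\textbf{u}}$ is the set of vertices obtained by successively maximizing $\langle\cdot,u_1\rangle$, then $\langle\cdot,u_2\rangle$ among the maximizers, and so on. (Each exposed set of $\textup{conv}(V)$ in direction $u_i$ is the convex hull of the maximizing points of $V$.) So it suffices to show that for small $\varepsilon>0$, the vertices maximizing $\langle\cdot,w_\varepsilon\rangle$ are exactly $V_{\textbf{u}}$.

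The key is a lexicographic comparison. For a vertex $v$, write $\langle v,w_\varepsilon\rangle=\sum_{i=1}^{j}\varepsilon^{i-1}\langle v,u_i\rangle$, a polynomial in $\varepsilon$. Compare $v\in V_{\textbf{u}}$ with any other vertex $v'$.

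If $v'\in V_{\textbf{u}}$ as well, all the coefficients $\langle v,u_i\rangle$ and $\langle v',u_i\rangle$ agree. This holds because each stage of the iteration fixes the value $\langle\cdot,u_i\rangle$ on the surviving set. So the two values are equal for every $\varepsilon$.

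If $v'\notin V_{\textbf{u}}$, let $i$ be the first stage at which $v'$ is eliminated. Then $\langle v',u_k\rangle=\langle v,u_k\rangle$ for $k<i$, and
\begin{align*}
\delta(v'):=\langle v,u_i\rangle-\langle v',u_i\rangle>0 .
\end{align*}
Hence
\begin{align*}
\langle v-v',w_\varepsilon\rangle\geq \varepsilon^{i-1}\Bigl(\delta(v')-\sum_{k>i}\varepsilon^{k-i}\,2D\Bigr),
\end{align*}
where $D:=\max_{x\in V}\|x\|_2$. The right side is positive once $\varepsilon<\varepsilon(v')$, for an explicit threshold such as $\varepsilon(v')=\min\{1,\delta(v')/(4jD)\}$ (any $\varepsilon(v')>0$ works if $D=0$).

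Take $\varepsilon_0:=\min_{v'\in V\setminus V_{\textbf{u}}}\varepsilon(v')$. This is positive since $V$ is finite; if $V=V_{\textbf{u}}$, take $\varepsilon_0=1$. Then for $0<\varepsilon<\varepsilon_0$, the maximizers of $\langle\cdot,w_\varepsilon\rangle$ over $V$ are exactly $V_{\textbf{u}}$. Therefore $P_{w_\varepsilon}=\textup{conv}(V_{\textbf{u}})=P_{\textbf{u}}$.

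One technicality: $w_\varepsilon$ is not a unit vector. Since the $u_i$ are orthonormal, $w_\varepsilon\neq 0$, and exposed sets depend only on the direction, so we may use $w_\varepsilon/\|w_\varepsilon\|_2\in\mathbb{S}^{n-1}$.

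The main point needing care is the first step: the identity $P_{\textbf{u}}=\textup{conv}(V_{\textbf{u}})$. This follows from the fact that an exposed face of the convex hull of a finite set is the convex hull of the maximizing points. The rest is the finite lexicographic estimate above, which is where finiteness of the vertex set (the polytope hypothesis) is used.
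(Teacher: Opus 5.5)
Your proposal is correct and follows essentially the same route as the paper: both define the successively maximizing vertex sets, identify $P_{\textbf{u}}$ with their convex hull, and use a lexicographic comparison of $\langle x-y,w_\varepsilon\rangle$ with finiteness of the vertex set to get a uniform $\varepsilon_0$. Your explicit threshold $\min\{1,\delta(v')/(4jD)\}$ and your remark that $w_\varepsilon\neq 0$ (so it can be normalized) simply make explicit what the paper leaves as ``$\varepsilon$ sufficiently small'' and handles in Lemma~\ref{lemma:polytope_equivalence_simultaneous}.
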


\begin{proof}
    Let $V_0$ denote the vertex set of $P$. For $1\leq r\leq j$, define 
    \begin{align*}
        V_r:=\left\{x\in V_{r-1}:\langle x,u_r\rangle=\max_{y\in V_{r-1}}\langle y,u_r\rangle\right\}.
    \end{align*}
    In fact, the set $V_r$ is the vertex set of the exposed face $P_{(u_1,\dots,u_r)}$, so we have
    \begin{align*}
        P_{(u_1,\dots,u_r)}=\textup{conv}(V_r).
    \end{align*}
    Fix $x\in V_j$ and $y\in V_0\backslash V_j$. Let $r$ be the smallest index such that $y$ does not belong to $V_r$. Since the sets $V_i$ are nested in the decreasing sense, we have that $x$ belongs to all the $V_i$, and so
    \begin{align*}
        \langle x,u_i\rangle=\langle y,u_i\rangle\qquad i<r,
    \end{align*}
    but
    \begin{align*}
        \langle x,u_r\rangle >\langle y,u_r\rangle.
    \end{align*}
    Next, compute
    \begin{align*}
        \langle x-y,w_{\varepsilon}\rangle=\varepsilon^{r-1}\langle x-y,u_r\rangle+\sum_{i=r+1}^{j}\varepsilon^{i-1}\langle x-y,u_i\rangle.
    \end{align*}
    Since the leading term is positive, the above quantity is positive for $\varepsilon>0$ sufficiently small. Since $P$ has finitely many vertices, there exists $\varepsilon_0$ so small that the above quantity is positive for any choice of $x\in V_j$ and $y\in V_0\backslash V_j$. Now, take $x,x^{\prime}\in V_j$. Then $\langle x,u_i\rangle=\langle x^{\prime},u_i\rangle$ for every $1\leq i\leq j$, and therefore $\langle x,w_{\varepsilon}\rangle=\langle x^{\prime},w_{\varepsilon}\rangle$. It follows that the vertices $x\in V_0$ that maximize $\langle x,w_{\varepsilon}\rangle$ are exactly the vertices $V_j$. Therefore, since the face of a convex polytope is the convex hull of its vertices, we have
    \begin{align*}
        P_{w_{\varepsilon}}=\textup{conv}(V_j)=P_{\textbf{u}}.
    \end{align*}
\end{proof}

\begin{lemma}\label{lemma:polytope_equivalence_simultaneous}
    Let $K\subset\mathbb{R}^n$ be a nonempty compact convex set, and let $A\subset\mathbb{R}^n$ be a nonempty compact set. If both $K$ and $\textup{conv}(A)$ are polytopes that are not necessarily full-dimensional, then for every $\textbf{u}\in O_j$ there exists $w\in \mathbb{S}^{n-1}$ such that 
    \begin{align*}
        K_w=K_{\textbf{u}}\qquad \textup{and}\qquad A_w=A_{\textbf{u}}.
    \end{align*}
\end{lemma}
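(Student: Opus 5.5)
Apply Lemma \ref{lemma:one_level_polytope} twice, once to $K$ and once to $P:=\textup{conv}(A)$, with the same tuple $\textbf{u}$. This produces thresholds $\varepsilon_K,\varepsilon_P>0$. Set $\varepsilon_0:=\min\{\varepsilon_K,\varepsilon_P\}$ and fix any $0<\varepsilon<\varepsilon_0$. The vector
\begin{align*}
w_\varepsilon=u_1+\varepsilon u_2+\dots+\varepsilon^{j-1}u_j
\end{align*}
is nonzero, since the $u_i$ are orthonormal and the coefficient of $u_1$ is $1$. We then have $K_{w_\varepsilon}=K_{\textbf{u}}$ and $P_{w_\varepsilon}=P_{\textbf{u}}$.

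Exposed sets depend only on the direction of the vector, not its length, so $S_{cw}=S_{w}$ for every $c>0$. Hence it suffices to take $w:=w_\varepsilon/\|w_\varepsilon\|_2\in\mathbb{S}^{n-1}$. The lemma formally defines $S_u$ only for unit $u$, but its proof uses only the maximizers of $\langle\cdot,w_\varepsilon\rangle$, so the normalization is harmless. This already gives $K_w=K_{\textbf{u}}$.

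To pass from $P=\textup{conv}(A)$ to $A$, use the identities \eqref{eq:usefl_face_identities}. First, $A_w=A\cap P_w$ for a single direction: $h_A=h_P$, so the points of $A$ attaining $h_A(w)$ are exactly the points of $A$ lying in $P_w$. Second, $A_{\textbf{u}}=A\cap P_{\textbf{u}}$ by the second identity in \eqref{eq:usefl_face_identities}. Combining these,
\begin{align*}
A_w=A\cap P_w=A\cap P_{\textbf{u}}=A_{\textbf{u}}.
\end{align*}

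None of these steps is a real obstacle. The only point needing care is that a single $\varepsilon$ has to work for both polytopes simultaneously, and taking the minimum of the two thresholds handles this. One should also note that Lemma \ref{lemma:one_level_polytope} holds for polytopes that are not full-dimensional: its proof uses only the finiteness of the vertex set and the fact that every face is the convex hull of its vertices, and both remain true in any dimension.
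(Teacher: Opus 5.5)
Your proposal is correct and follows essentially the same route as the paper: apply Lemma \ref{lemma:one_level_polytope} to $K$ and to $\textup{conv}(A)$ with a common $\varepsilon$ below both thresholds, normalize $w_\varepsilon$, and pass from $\textup{conv}(A)$ to $A$ using \eqref{eq:usefl_face_identities}. Your additional remarks (that $w_\varepsilon\neq 0$, that $A_w=A\cap P_w$ for a single direction, and that Lemma \ref{lemma:one_level_polytope} does not need full-dimensionality) fill in details the paper leaves implicit.
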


\begin{proof}
    Set $C:=\textup{conv}(A)$. By Lemma \ref{lemma:one_level_polytope}, there exist positive $\varepsilon_K$ and $\varepsilon_C$ such that 
    \begin{align*}
        K_{w_{\varepsilon}}=K_{\textbf{u}}
    \end{align*}
    for $0<\varepsilon<\varepsilon_K$, and
    \begin{align*}
        C_{w_{\varepsilon}}=C_{\textbf{u}}
    \end{align*}
    for $0<\varepsilon<\varepsilon_C$. Choose $0<\varepsilon <\min \{\varepsilon_K,\varepsilon_C\}$, and set $w:=w_{\varepsilon}/\|w_{\varepsilon}\|_2$. Since normalization does not change the exposed faces, we have
    \begin{align*}
        K_{w}=K_{\textbf{u}},\qquad C_w=C_{\textbf{u}}.
    \end{align*}
    Finally, by \eqref{eq:usefl_face_identities} we have
    \begin{align*}
        A_{w}=A\cap C_w=A\cap C_{\textbf{u}}=A_{\textbf{u}},
    \end{align*}
    which proves what we need.
\end{proof}

\begin{proof}[Proof of Theorem \ref{theorem:polygone_equivalence}]
    By Theorem \ref{theorem:main_theorem}, it suffices to show that 
    \begin{align*}
        \sup_{\textbf{u}\in O_j}\rho^{(K_{\textbf{u}
        }^*)}(A_{\textbf{u}})\leq \sup_{u\in \mathbb{S}^{n-1}}\rho^{(K_u^*)}(A_u)
    \end{align*}
    for each $1\leq j\leq n-1$. To see this, fix $\textbf{u}\in O_j$. By Lemma \ref{lemma:polytope_equivalence_simultaneous}, there exists $w\in \mathbb{S}^{n-1}$ such that 
    \begin{align*}
        K_{\textbf{u}}=K_w,\qquad A_{\textbf{u}}=A_w.
    \end{align*}
    Identical convex sets have identical relative centroids, so we have
    \begin{align*}
        \rho^{(K_{\textbf{u}}^*)}(A_{\textbf{u}})=\rho^{(K_{w}^*)}(A_w)\leq \sup_{u\in \mathbb{S}^{n-1}}\rho^{(K_u^*)}(A_u).
    \end{align*}
    The result follows by taking the supremum over $\textbf{u}\in O_j$.
\end{proof}

For the next result, note that convex bodies $K,L\subset\mathbb{R}^n$ are called strongly isomorphic if for each $u\in \mathbb{S}^{n-1}$, we have 
\begin{align*}
    \textup{dim}(K_u)=\textup{dim}(L_u).
\end{align*}
That is, corresponding exposed faces have the same dimension.

\begin{corollary}\label{corollary:strongly_isomorphic_polytopes}
    Let $K\subset\mathbb{R}^n$ be a convex body, and let $A\subset\mathbb{R}^n$ be a full-dimensional compact set. If $K$ and $\textup{conv}(A)$ are strongly isomorphic convex polytopes, then $c^{(K)}(A)<\infty$.
\end{corollary}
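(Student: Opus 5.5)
By Theorem \ref{theorem:polygone_equivalence}, it suffices to show that $\sup_{u\in\mathbb{S}^{n-1}}\rho^{(K_u^*)}(A_u)<\infty$. Set $C:=\textup{conv}(A)$. Since $C$ is a polytope, it has only finitely many exposed faces $F_1,\dots,F_N$. For each $u$, the face $C_u$ equals some $F_i$, and by \eqref{eq:usefl_face_identities} we have $A_u=A\cap F_i$. So only finitely many sets $A_u$ occur. Likewise, $K$ has finitely many faces, so only finitely many $K_u^*$ occur.

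The main point is to show that each pair $(A_u,K_u^*)$ gives a finite covering radius. Once that is done, the supremum is a maximum over finitely many pairs and is therefore finite.

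\textbf{Step 1: strong isomorphism aligns faces.} For strongly isomorphic polytopes, the normal fans coincide. Indeed, $\textup{dim}(K_u)=\textup{dim}(C_u)$ for all $u$, and the relative interiors of the normal cones are exactly the regions where the face is constant. To see this:
\begin{itemize}
\item If $u$ lies in the relative interior of the normal cone of a $k$-face of $C$, then $\textup{dim}(C_u)=k$.
\item The set of $u$ with $\textup{dim}(K_u)=n-1-\textup{dim}(N)$ recovers the cones of the fan of $K$ as well.
\end{itemize}
This is a standard fact, and I would cite it (Schneider's book) rather than reprove it.

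It follows that for each $u$, the faces $K_u$ and $C_u$ have normal cones with the same relative interior containing $u$. Consequently $K_u$ and $C_u$ are parallel, with $\textup{aff}(K_u)-\textup{aff}(K_u)=\textup{aff}(C_u)-\textup{aff}(C_u)$. This identification of the linear parts of the affine hulls is what we actually need.

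\textbf{Step 2: finiteness for a single pair.} Fix $u$, write $F:=C_u$, and let $L$ be the linear subspace parallel to $\textup{aff}(F)$, which by Step 1 is also parallel to $\textup{aff}(K_u)$. For every $x\in\textup{conv}(A_u)=F$, the affine set $x+\textup{aff}(K_u^*)=x+L$ contains $F$, and $F\supset A_u\neq\varnothing$. Hence $(x+L)\cap A_u\neq\varnothing$, and Proposition \ref{theorem:hausdorff_distance_finite_condition} gives $\rho^{(K_u^*)}(A_u)<\infty$.

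There is one degenerate case: if $K_u$ is a point, then $C_u$ is also a point, so $A_u$ is a single point and the radius is $0$. For this step, note that $A_u$ is nonempty because it contains the vertices of $F$, which are extreme points of $C$ and therefore lie in $A$.

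\textbf{Step 3: uniformity.} The covering radius $\rho^{(K_u^*)}(A_u)$ depends only on the pair $(C_u,K_u)$, and only finitely many such pairs arise. So the supremum is a maximum of finitely many finite numbers, and Theorem \ref{theorem:polygone_equivalence} concludes the proof.

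The main obstacle I expect is Step 1: justifying carefully that equality of face dimensions in every direction forces the two normal fans to coincide, and hence forces corresponding faces to be parallel. I would argue by induction on codimension, starting from facets. The facet normals are exactly the $u$ with face dimension $n-1$, so the two polytopes share facet normals. Each lower-dimensional normal cone is then determined as the set of $u$ whose face dimension drops accordingly. Alternatively, I would simply quote this as a known characterization of strong isomorphism.
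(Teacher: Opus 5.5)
Your proposal is correct and follows the paper's proof essentially step for step. Both arguments use strong isomorphism to get parallel affine hulls of $K_u$ and $C_u$ (cited from Schneider), apply Proposition~\ref{theorem:hausdorff_distance_finite_condition} face by face, handle the point case separately, use finiteness of the pairs $(K_u,A_u)$, and conclude via Theorem~\ref{theorem:polygone_equivalence}; the only difference is that you phrase the alignment through coincident normal fans, whereas the paper directly quotes that $\textup{aff}(K_u)$ and $\textup{aff}(C_u)$ are translates.
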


\begin{proof}[Proof of Corollary \ref{corollary:strongly_isomorphic_polytopes}]
Set $C:=\textup{conv}(A)$ and let $u\in \mathbb{S}^{n-1}$. If $K_u$ is a point, then the strong isomorphism of $K$ and $C$ implies that $C_u$ is a point. Then $C_u=A_u$ and $\rho^{(K_u^*)}(A_u)=0$. Suppose that $\textup{dim}(K_u)\geq 1$. Since $K$ and $C$ are strongly isomorphic, the affine hulls of $K_u$ and $C_u$ are translates of each other; see the proof of Corollary 2.4.12 in Schneider \cite[Page 110]{schneider-2014}. Then
\begin{align*}
    \textup{span}(K_u-K_u)=\textup{span}(C_u-C_u).
\end{align*}
Since $0\in K_u^*$, we have 
\begin{align*}
    \textup{aff}(K_u^*)=\textup{span}(K_u-K_u)=\textup{span}(C_u-C_u).
\end{align*}
Let $x\in \textup{conv}(A_u)$. By \eqref{eq:usefl_face_identities}, we have $x\in C_u$, and therefore
\begin{align*}
    x+\textup{aff}(K_u^*)=\textup{aff}(C_u).
\end{align*}
Since $A_u$ is a nonempty subset of $\textup{aff}(C_u)$, we see that $x+\textup{aff}(K_u^*)$ intersects $A_u$. By Proposition \ref{theorem:hausdorff_distance_finite_condition}, we have that $\rho^{(K_u^*)}(A_u)<\infty$ for every $u\in \mathbb{S}^{n-1}$. Since $K$ and $C$ are polytopes, there are finitely many distinct pairs $(K_u,A_u)$, so we conclude that 
\begin{align*}
    \sup_{u\in \mathbb{S}^{n-1}}\rho^{(K_u^*)}(A_u)<\infty.
\end{align*}
The result follows from Theorem \ref{theorem:polygone_equivalence}.
\end{proof}

\bibliography{bibliography}

@article {artstein2,
    AUTHOR = {Artstein, Zvi},
     TITLE = {Convexification estimates for {M}inkowski averages in infinite
              dimensions},
   JOURNAL = {Pure Appl. Funct. Anal.},
  FJOURNAL = {Pure and Applied Functional Analysis},
    VOLUME = {6},
      YEAR = {2021},
    NUMBER = {4},
     PAGES = {709--718},
      ISSN = {2189-3756,2189-3764},
   MRCLASS = {52A05 (41A46 46A55 46B50 52A39)},
  MRNUMBER = {4302897},
}

@article {artstein3,
    AUTHOR = {Artstein, Zvi and Kadets, Vladimir},
     TITLE = {B-convexity, convexification of {M}inkowski averages in a
              {B}anach space, and {SLLN} for random sets},
   JOURNAL = {J. Convex Anal.},
  FJOURNAL = {Journal of Convex Analysis},
    VOLUME = {32},
      YEAR = {2025},
    NUMBER = {1},
     PAGES = {61--70},
      ISSN = {0944-6532,2363-6394},
   MRCLASS = {46B20 (28B20)},
  MRNUMBER = {4950448},
}

@article {AFS-2025,
    AUTHOR = {Artstein-Avidan, Shiri and Falah, Tomer and Slomka, Boaz A.},
     TITLE = {Vertex generated polytopes},
   JOURNAL = {Pure Appl. Funct. Anal.},
  FJOURNAL = {Pure and Applied Functional Analysis},
    VOLUME = {10},
      YEAR = {2025},
    NUMBER = {5},
     PAGES = {1171--1192},
      ISSN = {2189-3756,2189-3764},
   MRCLASS = {52A40 (52B12 52C17)},
  MRNUMBER = {5000520},
}

@article {schneider1,
    AUTHOR = {Schneider, Rolf},
     TITLE = {A measure of convexity for compact sets},
   JOURNAL = {Pacific J. Math.},
  FJOURNAL = {Pacific Journal of Mathematics},
    VOLUME = {58},
      YEAR = {1975},
    NUMBER = {2},
     PAGES = {617--625},
      ISSN = {0030-8730,1945-5844},
   MRCLASS = {52A20},
  MRNUMBER = {380628},
MRREVIEWER = {H.\ W.\ Guggenheimer},
       URL = {http://projecteuclid.org/euclid.pjm/1102905693},
}

@article {cassels1,
    AUTHOR = {Cassels, J. W. S.},
     TITLE = {Measures of the non-convexity of sets and the
              {S}hapley-{F}olkman-{S}tarr theorem},
   JOURNAL = {Math. Proc. Cambridge Philos. Soc.},
  FJOURNAL = {Mathematical Proceedings of the Cambridge Philosophical
              Society},
    VOLUME = {78},
      YEAR = {1975},
    NUMBER = {3},
     PAGES = {433--436},
      ISSN = {0305-0041,1469-8064},
   MRCLASS = {52A05 (90A99)},
  MRNUMBER = {385711},
MRREVIEWER = {Jacques\ Dubois},
       DOI = {10.1017/S0305004100051884},
       URL = {https://doi.org/10.1017/S0305004100051884},
}

@article {Dyn--Farkhi-2005,
    AUTHOR = {Dyn, Nira and Farkhi, Elza},
     TITLE = {Set-valued approximations with {M}inkowski
              averages---convergence and convexification rates},
   JOURNAL = {Numer. Funct. Anal. Optim.},
  FJOURNAL = {Numerical Functional Analysis and Optimization. An
              International Journal},
    VOLUME = {25},
      YEAR = {2004},
    NUMBER = {3-4},
     PAGES = {363--377},
      ISSN = {0163-0563,1532-2467},
   MRCLASS = {41A30},
  MRNUMBER = {2072074},
       DOI = {10.1081/NFA-120039682},
       URL = {https://doi.org/10.1081/NFA-120039682},
}

@article {emerson1,
    AUTHOR = {Emerson, W. R. and Greenleaf, F. P.},
     TITLE = {Asymptotic behavior of products {$C\sp{p}=C+\cdots +C$} in
              locally compact abelian groups},
   JOURNAL = {Trans. Amer. Math. Soc.},
  FJOURNAL = {Transactions of the American Mathematical Society},
    VOLUME = {145},
      YEAR = {1969},
     PAGES = {171--204},
      ISSN = {0002-9947,1088-6850},
   MRCLASS = {22.20},
  MRNUMBER = {249535},
MRREVIEWER = {L.\ Corwin},
       DOI = {10.2307/1995065},
       URL = {https://doi.org/10.2307/1995065},
}

@book {BKFR-2026,
    AUTHOR = {B\"or\"oczky, K\'aroly J. and Figalli, Alessio and Ramos,
              Jo\~ao P. G.},
     TITLE = {Isoperimetric inequalities, {B}runn-{M}inkowski theory and
              {M}inkowski-type {M}onge-{A}mp\`ere equations on the sphere},
    SERIES = {Zurich Lectures in Advanced Mathematics},
 PUBLISHER = {EMS Press, Berlin},
      YEAR = {[2026] \copyright 2026},
     PAGES = {xv+523},
      ISBN = {978-3-98547-104-1; 978-3-98547-604-6},
   MRCLASS = {52A40 (26D10 26D15 35J96 49Q10 49Q15 52A39)},
  MRNUMBER = {5095086},
}

@article {fradelizi1,
    AUTHOR = {Fradelizi, Matthieu and Madiman, Mokshay and Marsiglietti,
              Arnaud and Zvavitch, Artem},
     TITLE = {The convexification effect of {M}inkowski summation},
   JOURNAL = {EMS Surv. Math. Sci.},
  FJOURNAL = {EMS Surveys in Mathematical Sciences},
    VOLUME = {5},
      YEAR = {2018},
    NUMBER = {1-2},
     PAGES = {1--64},
      ISSN = {2308-2151,2308-216X},
   MRCLASS = {60E15 (11B13 60F15 94A17)},
  MRNUMBER = {3880220},
       DOI = {10.4171/EMSS/26},
       URL = {https://doi.org/10.4171/EMSS/26},
}

@article {meyer2,
    AUTHOR = {Meyer, Mark},
     TITLE = {Measuring the convexity of compact sumsets with the
              {S}chneider non-convexity index},
   JOURNAL = {J. Inequal. Appl.},
  FJOURNAL = {Journal of Inequalities and Applications},
      YEAR = {2025},
     PAGES = {Paper No. 103, 32},
      ISSN = {1029-242X},
   MRCLASS = {52A40},
  MRNUMBER = {4951555},
MRREVIEWER = {Xiao\ Li},
       DOI = {10.1186/s13660-025-03356-w},
       URL = {https://doi.org/10.1186/s13660-025-03356-w},
}

@article {meyer3,
    AUTHOR = {Meyer, Mark},
     TITLE = {The {D}yn-{F}arkhi conjecture and the convex hull of a sumset
              in two dimensions},
   JOURNAL = {Proc. Amer. Math. Soc.},
  FJOURNAL = {Proceedings of the American Mathematical Society},
    VOLUME = {153},
      YEAR = {2025},
    NUMBER = {11},
     PAGES = {4889--4901},
      ISSN = {0002-9939,1088-6826},
   MRCLASS = {52A40 (52A10)},
  MRNUMBER = {4971580},
MRREVIEWER = {Tomasz\ Tkocz},
       DOI = {10.1090/proc/17313},
       URL = {https://doi.org/10.1090/proc/17313},
}

@book {schneider-2014,
    AUTHOR = {Schneider, Rolf},
     TITLE = {Convex bodies: the {B}runn-{M}inkowski theory},
    SERIES = {Encyclopedia of Mathematics and its Applications},
    VOLUME = {151},
   EDITION = {expanded},
 PUBLISHER = {Cambridge University Press, Cambridge},
      YEAR = {2014},
     PAGES = {xxii+736},
      ISBN = {978-1-107-60101-7},
   MRCLASS = {52-02 (52A20 52A39)},
  MRNUMBER = {3155183},
MRREVIEWER = {Andrea\ Colesanti},
}

@article{starr1,
  title={Quasi-equilibria in markets with non-convex preferences},
  author={Starr, Ross M},
  journal={Econometrica: journal of the Econometric Society},
  pages={25--38},
  year={1969},
  publisher={JSTOR}
}

@article{vanhintum3,
  title={The sharp threshold for {H}ausdorff convexification under {M}inkowski addition},
  author={van Hintum, Peter},
  journal={arXiv preprint arXiv:2606.10815},
  year={2026}
}
\bibliographystyle{plain}

\end{document}